\newcommand{\diam}{\operatorname{diam}}
\newcommand{\dist}{\operatorname{dist}}
\newcommand{\modu}{\operatorname{mod}}
\newcommand{\hatc}{\hat{\mathbb{C}}}
\newcommand{\Le}{\operatorname{Le}}
\newcommand{\Ri}{\operatorname{Ri}}
\newcommand{\Up}{\operatorname{Up}}
\newcommand{\Do}{\operatorname{Do}}
\newcommand{\capa}{\operatorname{cap}}
\numberwithin{equation}{section} \theoremstyle{plain}
\newtheorem{theorem}{\textbf{THEOREM}}[section]
\newtheorem{lemma}[theorem]{\textsc{Lemma}}%[section]
\newtheorem{proposition}[theorem]{\textsc{Proposition}}%[section]
\theoremstyle{definition}
\theoremstyle{remark} \newtheorem{remark}[theorem]{Remark}}
\def\charfn_#1{{\raise1.2pt\hbox{$\chi_{\kern-1pt\lower3pt\hbox{{$\scriptstyle#1$}}}$}}}
\def\leq{\leqslant }
\def\geq{\geqslant }
\def\XXint#1#2#3{{\setbox0=\hbox{$#1{#2#3}{\int}$}
\vcenter{\hbox{$#2#3$}}\kern-.5\wd0}}
\begin{document}

\title{Rigid circle domains with non-removable boundaries}
%\author{Kai Rajala} 
\author{Kai Rajala} 
	\address{Department of Mathematics and Statistics, University of Jyv\"askyl\"a, P.O. Box 35 (MaD), FI-40014, University of Jyv\"askyl\"a, Finland.}
	\email{kai.i.rajala@jyu.fi}
\date{}

\thanks{  
\newline {\it 2020 Mathematics Subject Classification.} 30C20, 30C35. 
	%\newline {\it Keywords.} mappings of finite distortion, Jacobian. \newline
\newline Research supported by the Research Council of Finland, project number 360505. }

\maketitle

\begin{center}
   \vspace{.3cm}
    \textit{Dedicated to Jang-Mei Wu.}
   \vspace{.3cm}
\end{center}

\begin{abstract} 
We give a negative answer to the \emph{rigidity conjecture} of He and Schramm by constructing a rigid circle domain $\Omega$ on the Riemann sphere $\hatc$ with conformally 
non-removable boundary. Here rigidity means that every conformal map from $\Omega$ onto another circle domain is a M\"obius transformation, and non-removability means that 
there is a homeomorphism of $\hatc$ which is conformal on $\hatc \setminus \partial \Omega$ but not everywhere. 

Our construction is based on a theorem of Wu, which states that the product of any Cantor set $E$ with a sufficiently thick Cantor set $F$ is non-removable. We show that one can choose   
$E$ and $F$ so that the complement of the union of $E \times F$ and suitably placed disks is rigid. 

The proof of rigidity involves a metric characterization of conformal maps, which was recently proved by Ntalampekos. The other direction of the rigidity conjecture, i.e., whether removability of the boundary implies rigidity, remains 
open. 
\end{abstract}

\renewcommand{\baselinestretch}{1.2}

%%%%%%%%%%%%%%%%%%%%%%%%%%%%%%%%%%%%%%%%%%%%%%%%%%%%%%%%%%%%%%%%%%%%%%%%%%%%%%%%%%%%%%%%%%%%%%%%%%%%%%%%%%%%%%%%%
%%%%%%%%%%%%%%%%%%%%%%%%%%%%%%%%%%%%%%%%%%%%%%%%%%%%%%%%%%%%%%%%%%%%%%%%%%%%%%%%%%%%%%%%%%%%%%%%%%%%%%%%%%%%%%%%%

\section{Introduction}
A subdomain $\Omega$ of the Riemann sphere $\hatc$ is a \emph{circle domain} if every connected component of $\partial \Omega$ is a circle or a point. The long-standing 
\emph{Koebe conjecture} \cite{Koe08} asserts that every subdomain of $\hatc$ admits a conformal map $f$ onto a circle domain. Koebe proved that every \emph{finitely connected} 
domain satisfies the conjecture and that $f$ is unique up to postcomposition by a M\"obius transformation. 

Uniqueness is equivalent to \emph{ridigity}: a circle domain $\Omega$ is \emph{(conformally) rigid}, if every conformal map $f:\Omega \to \Omega'$ onto another circle domain 
is the restriction of a M\"obius transformation. Complements of Cantor sets $K$ with positive area are basic examples of non-rigid circle domains; solving the \emph{Beltrami 
equation} (see e.g. \cite{Ahl66}, \cite{AIM09}) with coefficient $\mu = \frac{1}{2}\chi_K$ yields a quasiconformal homeomorphism which is conformal only in $\hatc \setminus K$. 

In a breakthrough work \cite{HeSch93}, He and Schramm applied the rigidity of \emph{countably connected circle domains} to verify Koebe's conjecture for all   
countably connected domains. In \cite{HeSch94}, they moreover proved the rigidity of circle domains whose boundary has \emph{$\sigma$-finite length}. Further sufficient conditions for rigidity were 
established by Ntalampekos and Younsi in \cite{NtaYou20}, \cite{You16}, and \cite{Nta23a} (see also \cite{BKM09}, \cite{Bon11}, \cite{Mer12}). 

Towards a characterization of rigid circle domains $\Omega$, He and Schramm \cite{HeSch94} pointed out connections to \emph{conformal removability}, and conjectured that rigidity of $\Omega$ 
is equivalent to the conformal removability of $\partial \Omega$. Here a compact $K \subset \hatc$ is \emph{conformally removable} (or \emph{CH-removable}), if every homeomorphism 
$h:\hatc \to \hatc$, which is conformal on $\hatc \setminus K$, is a M\"obius transformation. Our main result gives a negative answer.

\begin{theorem}
\label{thm:main}
There is a rigid circle domain $\Omega \subset \hatc$ such that $\partial \Omega$ is conformally non-removable. 
\end{theorem}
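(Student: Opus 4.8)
The plan is to construct $\Omega = \hatc \setminus \bigl( (E \times F) \cup \bigcup_j \overline{D_j} \bigr)$, where $E$ and $F$ are carefully chosen Cantor sets on the two coordinate axes and $\{D_j\}$ is a countable family of round disks placed in the complementary squares of the product Cantor set. The set $F$ must be thick enough that Wu's theorem applies, guaranteeing that $E \times F$ — hence a fortiori the larger set $\partial \Omega \supset E \times F$ — is conformally non-removable; this takes care of one half of Theorem~\ref{thm:main} essentially by design, though one must check that adding the disks and their limit points does not accidentally restore removability (here one can use that a countable union of CH-removable sets together with the non-removable $E\times F$ stays non-removable, or argue directly that the quasiconformal map witnessing non-removability of $E\times F$ can be taken to fix each $\overline{D_j}$).

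The substantive direction is rigidity. First I would fix a conformal map $f \colon \Omega \to \Omega'$ onto another circle domain and aim to show $f$ is Möbius. The classical tool — extending $f$ across the boundary by reflection, as He–Schramm do — is unavailable because $\partial\Omega$ is non-removable, so instead I would invoke Ntalampekos's metric characterization of conformal maps: roughly, that a homeomorphism between circle domains which is conformal off the boundary is Möbius provided it has good metric/geometric control (e.g., it maps the relevant curve families correctly, or the boundary components behave rigidly under it). The strategy is to show $f$ extends to a homeomorphism $\hatc \to \hatc$ — this part should follow from the structure of $\Omega$, since the complementary components are either round disks (which $f$ must map to round disks, by a Carathéodory/Koebe argument applied componentwise once one knows $f$ extends continuously) or points of $E \times F$ — and then to verify the metric hypotheses of Ntalampekos's theorem. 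The key geometric input is that $E \times F$, while non-removable, is ``thin'' in a controlled direction: $E$ can be chosen of Hausdorff dimension (or even length) small enough, or with enough porosity, that the boundary is negligible for the modulus estimates underlying the metric characterization, even as $F$ is forced to be thick by Wu's theorem. So the construction must simultaneously satisfy two competing demands: $F$ thick (for non-removability via Wu), $E$ thin (for rigidity via Ntalampekos), and the disks $D_j$ placed densely enough to ``fill'' the product structure and pin down any conformal self-map.

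I expect the main obstacle to be reconciling these competing constraints on $E$ and $F$ quantitatively, and in particular verifying the hypotheses of Ntalampekos's characterization for this specific $\Omega$ — one must show that the conformal map $f$, a priori defined only on the open set $\Omega$, is controlled well enough near $\partial\Omega$ (continuity up to the boundary, correct image of each complementary component, and the modulus/length bounds) that the characterization forces it to be Möbius. A secondary technical point is the placement of the disks: they must be chosen so that the collection $\{D_j\}$ together with $E\times F$ has the property that any conformal map between the corresponding circle domains respects the combinatorial pattern (which disk goes to which), which is what ultimately rigidifies the map; this is typically arranged by a ``no two disks are conformally interchangeable'' argument using distinct sizes/positions, combined with the fact that the point-components $E\times F$ cannot be moved either once the disk pattern is fixed. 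I would organize the paper as: (i) recall Wu's theorem and Ntalampekos's characterization precisely; (ii) specify the Cantor sets $E$, $F$ and the disk family with the required quantitative properties; (iii) prove non-removability of $\partial\Omega$; (iv) prove the boundary extension and disk-to-disk property of any conformal $f$; (v) apply the metric characterization to conclude $f$ is Möbius.
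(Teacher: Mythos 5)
Your high-level architecture matches the paper's: a thick Cantor set $F$ to invoke Wu's theorem for non-removability, a thin Cantor set $E$ plus carefully placed disks for rigidity, boundary extension of $f$, and Ntalampekos's metric characterization of conformal maps as the final step. But two of your intermediate claims are off, and you are missing the key technical idea that makes the rigidity argument work. First, you claim the He--Schramm reflection extension is ``unavailable because $\partial\Omega$ is non-removable''; in fact the paper does perform exactly this reflection (Schottky-group) extension to get a global homeomorphism $f_*:\hatc\to\hatc$ conformal off an exceptional set $X\cup Y$. What non-removability precludes is the \emph{subsequent} step He--Schramm used (a removability/quasiconformal-removability argument for the exceptional set), and it is only there that Ntalampekos's theorem is substituted. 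Second, your ``no two disks are conformally interchangeable'' concern and the proposed size/position argument are not relevant: once $f$ extends to a homeomorphism $\overline{\Omega}\to\overline{\Omega'}$, Carathéodory already forces disk boundaries to disk boundaries, and the combinatorics is automatic; the obstruction to rigidity is whether $f_*$ could be non-Möbius while conformal off $\partial\Omega$, not which disk goes where. Also, your worry that ``adding the disks might restore removability'' is unnecessary: non-removability is monotone upward (a homeomorphism conformal off $E\times F$ is a fortiori conformal off any larger set), as you yourself observed a line earlier.

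The genuine gap is the mechanism by which thinness of $E$ yields rigidity. It is not that $\partial\Omega$ is ``negligible for the modulus estimates''; the actual argument surrounds each $z\in E\times F$, at every scale $k$, by a finite chain of nearby disks $D_1,\dots,D_\ell$ and exhibits path families $\Gamma_j$ in $\Omega$ joining consecutive disks with $\modu(\Gamma_j)\geq 1/10$ (this is where $\epsilon_k$ being tiny relative to $a_k$ and $s_k$ is used: the gaps between consecutive disks are much smaller than their radii, so vertical/horizontal segment families and annular arcs give modulus bounded below). Conformal invariance of modulus then forces each image family $f(\Gamma_j)$ to contain a short path, so the images of the chain disks stay within a bounded factor of the largest image disk, giving the distortion estimate $z\in 4D_z^k$ and $\hat f(\{z\})\subset 10^{30}\hat f(D_z^k)$. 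That estimate is what simultaneously (i) proves the boundary extension (the image disks shrink to a point) and (ii) gives the uniform bound on \emph{eccentric distortion} of $f_*$ at points of $X$, which is precisely the hypothesis Ntalampekos's theorem needs. Without identifying this surrounding-chain/eccentric-distortion argument, ``verify the metric hypotheses'' remains an unexecuted step, and the competing constraints on $E$ and $F$ are not actually reconciled.
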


The proof below shows that the answer to another version of the rigidity conjecture given in \cite{HeSch94}, which asks if rigidity is equivalent to the conformal removability of 
every Cantor set contained in $\partial \Omega$, is also negative. 

It follows from the definitions that if the boundary of a rigid circle domain \emph{is} a Cantor set $K$, then $K$ is conformally removable. 
The other direction of the rigidity conjecture, which asks if circle domains with removable boundaries are rigid, remains open even for domains with Cantor set boundaries. 

Conformal removability is an active and challenging research topic, see e.g. \cite{JonSmi00}, \cite{You15}, \cite{Nta19}, \cite{Nta24b}, \cite{Nta24c}, and the references therein. A major difficulty is that constructing non-trivial conformal maps $f$ outside exceptional sets becomes considerably harder if one also requires the existence of a \emph{homeomorphic} extension of $f$ to $\hatc$. 

A basic example of a non-removable set is $K=E \times [0,1]$ for any Cantor set $E$: one can apply an essentially $1$-dimensional construction, starting with a continuous measure on $E$, 
to produce a non-trivial homeomorphism which is conformal off $K$. Much more involved constructions of non-removable sets were given by Kaufman \cite{Kau84}, Bishop \cite{Bis94}, and Wu \cite{Wu98}, whose result is an important ingredient of the proof of Theorem \ref{thm:main}. Here $\capa$ is the logarithmic capacity, see e.g. \cite[Ch. 9]{Pom92}. 

\begin{theorem}[\cite{Wu98}] \label{thm:Wu}
Let $E$ and $F$ be two Cantor sets in $\mathbb{R}$. If 
\begin{equation} \label{eq:logcapa}
\capa([a,b] \setminus F) < \capa([a,b]) 
\end{equation} 
for some interval $[a,b]$, then $E \times F$ is conformally non-removable.  
\end{theorem}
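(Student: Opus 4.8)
The plan is to construct, starting from the hypotheses, a homeomorphism $h$ of $\hatc$ that is conformal on $\hatc \setminus (E\times F)$ but is not a M\"obius transformation; by definition this exhibits $E \times F$ as conformally non-removable.

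First I would localize. Non-removability is inherited by supersets, since a homeomorphism conformal off a smaller set is in particular conformal off any larger one, so it suffices to treat $E\times(F\cap[a,b])$; after an affine change of variables we may assume $[a,b]=[0,1]$ and $\capa([0,1]\setminus F)<\capa([0,1])$, the product structure and the capacity inequality being preserved. Note that this capacity gap forces $|F|>0$: if $F\cap[0,1]$ had zero length then $[0,1]\setminus F$ would contain compact sets of measure arbitrarily close to $1$, and by P\'olya's inequality $\capa(K)\ge|K|/4$ their capacities would approach $\capa([0,1])=1/4$, contradicting the hypothesis. Hence the genuinely delicate case is $|E|=0$, in which $E\times F$ has zero area and the desired $h$ cannot be quasiconformal on all of $\hatc$. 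Write $E=\bigcap_n E_n$ and $F=\bigcap_n F_n$ with $E_n,F_n$ finite unions of closed intervals decreasing to $E,F$, and fix a nonatomic probability measure $\mu$ supported on $E$, with continuous distribution function $c(x)=\mu(E\cap(-\infty,x])$, which is locally constant off $E$.

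Next I would build quasiconformal homeomorphisms $h_n$ of $\hatc$, each conformal outside $E_n\times F_n$, by an essentially one-dimensional construction. Over each complementary interval of $E_n$ sits a vertical strip disjoint from $E_n\times F_n$, over each complementary interval of $F_n$ a horizontal strip; on the grid of rectangles where these overlap $h_n$ is required to be conformal, while on the finitely many rectangles forming $E_n\times F_n$ it is allowed to be quasiconformal. The map $h_n$ should realize, at the level of the gaps of $E_n$, a ``vertical displacement'' governed by $c(\cdot)$ — rendered conformal away from $E_n\times F_n$ by distributing that displacement through the horizontal strips over the gaps of $F$ — and should agree with $h_{n-1}$ off a neighbourhood of $E_n\times F_n$. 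The crucial quantitative input is a bound on the total distortion introduced at stage $n$, phrased in terms of the conformal moduli (equivalently harmonic measures or extremal lengths) associated with the gaps of $F$ in $[0,1]$; this is exactly where $\capa([0,1]\setminus F)<\capa([0,1])$ is used, the capacity gap providing a definite lower bound on the ``room'' available in the $F$-direction and hence a uniform (ideally summable) control on the accumulated modulus changes. With such control in place the $h_n$ are equicontinuous and converge locally uniformly to a homeomorphism $h$ of $\hatc$.

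Finally I would verify the two properties of $h$. Since $h_n$ is conformal off $E_n\times F_n$ and these sets decrease to $E\times F$, any compact subset of a component of $\hatc\setminus(E\times F)$ lies in the conformality region of all large $n$, so a normal-families argument gives that $h$ is conformal on $\hatc\setminus(E\times F)$. That $h$ is not M\"obius is built into the construction: the displacement encoded by the nonatomic measure $\mu$ changes a fixed conformally invariant quantity — say the cross ratio of four marked points straddling $E$, or the extremal distance between two complementary arcs — by an amount bounded below independently of $n$, and this change persists in the limit, whereas a M\"obius transformation would preserve it. The main obstacle is the estimate in the third step: assembling the elementary conformal and quasiconformal pieces over the gaps of both Cantor sets so that the infinite composition does not degenerate, and proving that the capacity condition on $F$ is precisely what makes this possible — the localization, the normal-families limit, and the non-triviality via $\mu$ being comparatively routine once the maps $h_n$ are constructed.
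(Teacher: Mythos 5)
Your outer scaffolding --- localize, fix a nonatomic $\mu$ on $E$ with distribution function $c$, build quasiconformal approximants $h_n$ conformal off $E_n\times F_n$ and pass to a locally uniform limit --- is a reasonable shape for such an argument, and the final verifications you describe (a normal-families argument for conformality off $E\times F$, a persisting conformal invariant to exclude a M\"obius limit) are sound in outline. But the proposal has a genuine gap at precisely the step that carries all the content of the theorem, and it is a different route than the one the paper indicates.

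The paper uses the hypothesis \eqref{eq:logcapa} through the Ahlfors--Beurling theorem \cite{AhlBeu50}: that capacity inequality is what produces a conformal embedding $f\colon\hatc\setminus(\{0\}\times F)\to\hatc$ which is not M\"obius and hence admits no continuous extension to $\hatc$. One then ``thickens'' this slit picture by averaging translates of $f$ in the $E$-direction against $\mu$, producing a homeomorphism of $\hatc$ conformal off $E\times F$. In your write-up the role of \eqref{eq:logcapa} is relegated to an unstated estimate --- ``the capacity gap providing a definite lower bound on the `room' available in the $F$-direction and hence a uniform (ideally summable) control on the accumulated modulus changes'' --- but that bound \emph{is} the theorem. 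Observe that the shear $(x,y)\mapsto(x,y+c(x))$ is non-conformal wherever $c$ is non-constant, i.e.\ on all of $E_n\times\mathbb{R}$ and not merely $E_n\times F_n$; correcting it on $E_n\times([0,1]\setminus F_n)$ so as to be conformal there with distortion controlled uniformly in $n$ is exactly the content of the Ahlfors--Beurling embedding of $\hatc\setminus(\{0\}\times F)$, which your argument never invokes nor replaces with any concrete lemma. As written, the capacity hypothesis never actually does any work. You should identify the Ahlfors--Beurling theorem as the missing ingredient and use it as a black box, as Wu and the paper do; once $f$ is in hand, your averaging/limit framework can be reconciled with the paper's sketch.
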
 

Although the proof of Theorem \ref{thm:Wu} is subtle, the rough idea is similar to the case $E \times [0,1]$ above. Namely, by Ahlfors and Beurling \cite{AhlBeu50}, Condition \eqref{eq:logcapa} yields a non-trivial conformal embedding $f:\hatc \setminus (\{0\} \times F) \to \hatc$. Such an $f$ cannot admit a continuous 
extension to $\hatc$. However, given a continuous probability measure $\mu$ on $E$, one can produce a global homeomorphism that is conformal off $E \times F$, by considering averages of the map $f$ with 
respect to $\mu$ in the real variable. Thus, $E \times F$ is non-removable. 

To prove Theorem \ref{thm:main} we choose a thick Cantor set $F$ satisfying \eqref{eq:logcapa}, a thin Cantor set $E$, and closed disks $D_j$. We let $\Omega=\hatc \setminus ((E \times F) \cup (\cup_j D_j))$,  
and show that disks $D_j$ can be placed so that every conformal map $f:\Omega \to \Omega'$ between circle domains must have bounded \emph{eccentric distortion}. Therefore we can, after extending $f$ to a homeomorphism of $\hatc$ using a familiar reflection (Schottky group) method, apply a recent result of Ntalampekos \cite{Nta24a} to prove that $f$ is a M\"obius transformation. Thus $\Omega$ is rigid. But since $\partial \Omega \supset E \times F$, Theorem \ref{thm:Wu} shows that $\partial \Omega$ is conformally non-removable.

%%%%%%%%%%%%%%%%%%%%%%%%%%%%%%%%%%%%%%%%%%%%%%%%%%%%%%%%%%%%%%%%%%%%%%%%%%%%%%%%%%%%%%%%%%%%%%%%%%%
%%%%%%%%%%%%%%%%%%%%%%%%%%%%%%%%%%%%%%%%%%%%%%%%%%%%%%%%%%%%%%%%%%%%%%%%%%%%%%%%%%%%%%%%%%%%%%%%%%%

\section{Proof of Theorem \ref{thm:main}: Construction of $\Omega$} \label{sec:const}
To start the proof of Theorem \ref{thm:main}, we construct the complement of a circle domain $\Omega$ by using the basic building block in Section \ref{sec:block} followed by an iteration procedure in Section \ref{sec:itera}. 

\subsection{Basic building block} \label{sec:block}

We fix an even integer $N \geq 2$ and let $I_*^n$, $n \in \{1,\ldots,N\}$, be closed segments with equal length $\ell(I_*^n)=:2s$ obtained by removing $N-1$ open segments of length $a$ from $[0,1]$. The segments 
are ordered so that $I_*^1$ contains $0$ and $I_*^N$ contains $1$. 

We remove another open segment of length $a$ from the middle of each $I_*^n$ to obtain segments $J_*^n(\Do)$ and $J_*^n(\Up)$ of equal length $s-\frac{a}{2}$, 
containing the left, respectively right, endpoint of $I_*^n$. 

Next, we fix $\epsilon>0$ and define the following subsets of $\mathbb{C}$ for $n \in \{1,\ldots,N\}$:  
\begin{eqnarray}
\nonumber I^n(\Le) &=&-\epsilon+i I_*^n, \, I^n(\Ri)=\epsilon+iI_*^n, \\ 
\nonumber J^n(\Le,\Do)&=&-\epsilon+ iJ_*^n(\Do), \, J^n(\Le,\Up)=-\epsilon+iJ_*^n(\Up), \\
\nonumber J^n(\Ri,\Do)&=&\epsilon+iJ_*^n(\Do), \, J^n(\Ri,\Up)=\epsilon+iJ_*^n(\Up). 
\end{eqnarray} 
We denote by $R^n$ the closed rectangle whose vertical sides are the segments $I^n(\Le)$ and $I^n(\Ri)$, and by $iy^n$ the center of $R^n$. 
Finally, let $D^n(\Le)$ and $D^n(\Ri)$ be the closed disks with radius $s$ and centers $-2\epsilon-s+iy^n$ and $2\epsilon+s+iy^n$, respectively.

%%%%%%%%%%%%%%%%%%%%%%%%%%%%%%%%%%%%%%%%%%%%%%%%%%%%%%%%%%%%%%%%%%%%%%%%%%%%%%%%%%%%%%%%%%%%%%%%%%%
%%%%%%%%%%%%%%%%%%%%%%%%%%%%%%%%%%%%%%%%%%%%%%%%%%%%%%%%%%%%%%%%%%%%%%%%%%%%%%%%%%%%%%%%%%%%%%%%%%%

\subsection{Iteration} \label{sec:itera} 

Given the sequence of integers $N_j$ defined below, we denote by $\mathcal{N}_k$ the collection of words of length $k$ 
with letters $n_j \in \{1,\ldots,N_j\}$, i.e., 
$$
\mathcal{N}_k=\Big\{\tilde{n}=n_1n_2\cdots n_k:\, n_j \in \{1,\ldots,N_j\} \text{ for all } j \in \{1,\ldots,k\} \Big\}. 
$$
We also denote $\mathcal{W}_k=\mathcal{M}_k \times \mathcal{N}_k$, where %by $\mathcal{N}_k$ the collection of words of length $k$ with letters $n_j \in \{\Le,\Ri\}$, $j=1,\ldots,k$, i.e., 
$$ 
\mathcal{M}_k=\Big\{\tilde{m}=m_1m_2\cdots m_k: \,  m_j \in \{\Le,\Ri\} \text{ for all } j \in \{1,\ldots,k\} \Big\}. 
$$ 
We apply the construction in Section \ref{sec:block} with parameters $N=N_1,a=a_1,s=s_1,\epsilon=\epsilon_1$ satisfying 
\begin{equation} \label{eq:inichoice}
N_1=2, \quad \epsilon_1=\frac{a_1}{100}=\frac{s_1}{10^5}. 
\end{equation} 
We obtain rectangles $R^1,R^2$, as well as the other sets defined above. We complete the first step of the construction by ``duplicating'', i.e., if $n_1 \in \{1,2\}$ we 
define 
\begin{eqnarray*} 
R^w= \left\{ \begin{array}{ll}
-2+R^{n_1}, & w=(\Le,n_1) \in \mathcal{W}_1, \\ 
2+R^{n_1}, & w=(\Ri,n_1) \in \mathcal{W}_1, 
\end{array}
\right. 
\end{eqnarray*}
and use similar notation for the other sets constructed. Altogether, after the first step we have two copies of the sets constructed in Section \ref{sec:block}; one on the left half-plane and another one on the right half-plane, e.g., $J^{(\Le,2)}(\Le,\Up)=J^2(\Le,\Up)-2$ and $J^{(\Ri,2)}(\Le,\Up)=J^2(\Le,\Up)+2$. See Figure \ref{fig:kuva1} for an illustration. 

%%%%%%%%%%%%%%%%%%%%%%%%%%%%%%%%%%%%%%%%%%%%%%%%%%%%%%%%%%%%%%%%%%%%%%%%%%%%%%%%%%%%%%%%%%%%%%%%%%%
%%%%%%%%%%%%%%%%%%%%%%%%%%%%%%%%%%%%%%%%%%%%%%%%%%%%%%%%%%%%%%%%%%%%%%%%%%%%%%%%%%%%%%%%%%%%%%%%%%%

\begin{figure}
    \centering
    
\begin{tikzpicture}
\draw (-4.05,-2.3) -- (-4.05,-1.3);
\draw (-4.05,-1.1) -- (-4.05,-.1);
\draw (-4.05,.1) -- (-4.05,1.1);
\draw (-4.05,1.3) -- (-4.05,2.3);

\draw (-3.95,-2.3) -- (-3.95,-1.3);
\draw (-3.95,-1.1) -- (-3.95,-.1);
\draw (-3.95,.1) -- (-3.95,1.1);
\draw (-3.95,1.3) -- (-3.95,2.3);

\draw (4.05,-2.3) -- (4.05,-1.3);
\draw (4.05,-1.1) -- (4.05,-.1);
\draw (4.05,.1) -- (4.05,1.1);
\draw (4.05,1.3) -- (4.05,2.3);

\draw (3.95,-2.3) -- (3.95,-1.3);
\draw (3.95,-1.1) -- (3.95,-.1);
\draw (3.95,.1) -- (3.95,1.1);
\draw (3.95,1.3) -- (3.95,2.3);

\draw (-5.2,-1.2)  circle (1.1); 
\draw (-5.2,1.2)  circle (1.1); 
\draw (-2.8,-1.2)  circle (1.1); 
\draw (-2.8,1.2)  circle (1.1); 

\draw (5.2,-1.2)  circle (1.1); 
\draw (5.2,1.2)  circle (1.1); 
\draw (2.8,-1.2)  circle (1.1); 
\draw (2.8,1.2)  circle (1.1); 
\end{tikzpicture}

    \caption{First step of the construction. }
    \label{fig:kuva1}
\end{figure}
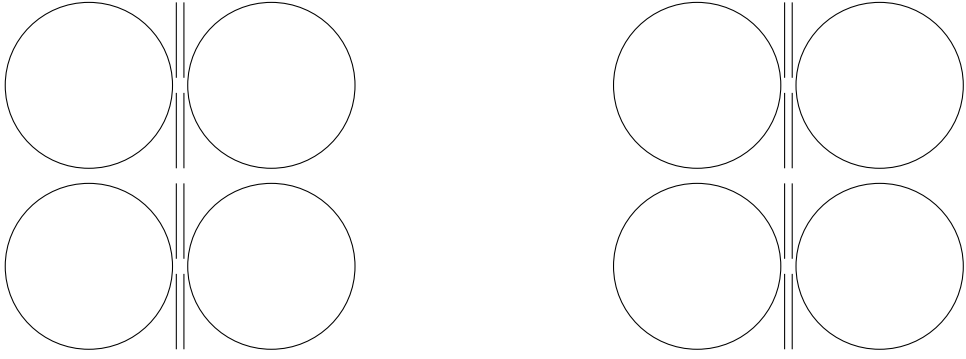

%%%%%%%%%%%%%%%%%%%%%%%%%%%%%%%%%%%%%%%%%%%%%%%%%%%%%%%%%%%%%%%%%%%%%%%%%%%%%%%%%%%%%%%%%%%%%%%%%%%
%%%%%%%%%%%%%%%%%%%%%%%%%%%%%%%%%%%%%%%%%%%%%%%%%%%%%%%%%%%%%%%%%%%%%%%%%%%%%%%%%%%%%%%%%%%%%%%%%%%

We then assume that rectangles 
\begin{equation} \label{eq:rectcoord}
R^{w}= [x^{\tilde{m}}-t_{k-1},x^{\tilde{m}}+t_{k-1}] \times [y^{\tilde{n}}-r_{k-1},y^{\tilde{n}}+r_{k-1}] 
\end{equation} 
and disks $D^w(\Le),D^w(\Ri)$ with radius $r_{k-1}$ and centers 
\begin{equation} \label{eq:centers} 
x^{\tilde{m}} \pm(2t_{k-1}+r_{k-1})+iy^{\tilde{n}}
\end{equation} 
have been constructed for all $w=(\tilde{m},\tilde{n}) \in \mathcal{W}_{k-1}$, $k \geq 2$. The coordinate $x^{\tilde{m}}$ does not depend on $\tilde{n}$, and the coordinate $y^{\tilde{n}}$ does not depend on $\tilde{m}$. 
By the first step above  we have $r_1=s_1$, $x^{\Le}=-2$, $x^{\Ri}=2$, and $t_1=\epsilon_1$. 

Intervals $J^w(\cdot,\Do)$ and $J^w(\cdot,\Up)$ are obtained 
by removing an open interval of length $\delta_{k-1}$ from $I^w(\cdot)$;  
\begin{eqnarray} \nonumber
I^w(\cdot) &=& x^{\tilde{m}} \pm t_{k-1}+ i[y^{\tilde{n}}-r_{k-1},y^{\tilde{n}}+r_{k-1}], \\ 
\label{eq:wilding} J^w(\cdot,\Do) &=& x^{\tilde{m}} \pm t_{k-1}+i\left[y^{\tilde{n}}-r_{k-1},y^{\tilde{n}}-\frac{\delta_{k-1}}{2}\right], \\ 
\nonumber J^w(\cdot,\Up) &=& x^{\tilde{m}} \pm t_{k-1}+i\left[y^{\tilde{n}}+\frac{\delta_{k-1}}{2},y^{\tilde{n}}+r_{k-1}\right]. 
\end{eqnarray}

We fix such a $w$. Our goal is to construct the segments, rectangles and disks corresponding to all the ``children'' $w'=(\tilde{m}m_{k},\tilde{n}n_k) \in \mathcal{W}_k$ of $w$. We denote by 
$\phi_{\Do}^w$ and $\phi^w_{\Up}$ the homotheties (i.e., maps $z \mapsto \alpha z +\beta$, where $\alpha>0$ and $\beta \in \mathbb{C}$) for which 
\begin{eqnarray*}
& &  \phi^w_{\Do}\big(x^{\tilde{m}}+i(y^{\tilde{n}}-r_{k-1})\big)=0 \quad \text{and} \quad \phi_{\Do}^w\Big(x^{\tilde{m}}+i(y^{\tilde{n}}-\frac{\delta_{k-1}}{2})\Big) = i, \\ 
& & \phi_{\Up}^w\Big(x^{\tilde{m}}+i(y^{\tilde{n}}+\frac{\delta_{k-1}}{2})\Big) = 0 \quad \text{and} \quad \phi^w_{\Up}\big(x^{\tilde{m}}+i(y^{\tilde{n}}+r_{k-1})\big)=i. 
\end{eqnarray*} 
Then there is $\hat{\epsilon}_{k-1} >0$ so that 
\begin{eqnarray*}
\begin{split}
\phi_{\Do}^w(J^w(\Le,\Do)) &= \phi_{\Up}^w(J^w(\Le,\Up)) 
= -\hat{\epsilon}_{k-1} + i[0,1] \quad \text{and } \\
\phi_{\Do}^w(J^w(\Ri,\Do)) &= \phi_{\Up}^w(J^w(\Ri,\Up)) 
= \hat{\epsilon}_{k-1} + i[0,1]. 
\end{split}
\end{eqnarray*}

Let $N=N_k$ and $a=a_k,s=s_k,\epsilon=\epsilon_k>0$ be the numbers for which 
\begin{equation} 
\label{eq:seas}
N_k =2\left\lceil \frac{100}{\hat{\epsilon}_{k-1}} \right\rceil, \quad \frac{a_k}{s_k}=\frac{1}{\exp((2N_k)^{2k})}, \quad   \epsilon_k =\min\left\{\frac{\hat{\epsilon}_{k-1}}{100},\frac{a_k}{100}\right\}. 
\end{equation} 
The idea behind the construction of $\Omega$ is that we can first choose $N_k$ (number of new rectangles $R^{w'}$) to be as large as we wish, then $\frac{a_k}{s_k}$ (relative size of removed segments) as small as we wish, and finally $\epsilon_k$ (relative width) as small as we wish. The requirement on $N_k$ and the first upper bound on 
$\epsilon_k$ in \eqref{eq:seas} guarantee that the sets defined below are disjoint subsets of $R^w$. 

The requirement on $\frac{a_k}{s_k}$ guarantees 
that the holes between the vertical segments are thin. Such thinness will lead to a thick Cantor set $F$ (on the imaginary axis) satisfying Condition \eqref{eq:logcapa}, and to the non-removability of $\partial \Omega$. The second upper bound on $\epsilon_k$ will lead to a Cantor set $E$ (on the real axis), which ``is thinner than $F$ is thick''. The precise meaning of 
such thinness will be given in terms of conformal modulus estimates in Section \ref{sec:modusec}, which will be applied to prove the ridigity of $\Omega$. 

Applying the construction in Section \ref{sec:block} with the parameters $\frac{N_k}{2},a_k,s_k,\epsilon_k$ defined in \eqref{eq:seas} yields rectangles $R_*:=R_*^{\tilde{n}n_k}$ and disks 
$D_*^{\tilde{n}n_k}(\Le), D_*^{\tilde{n}n_k}(\Ri)$ for all $n_k \in \{1,\ldots,\frac{N_k}{2}\}$. Finally, given $w'=(\tilde{m}m_k,\tilde{n}n_k)$, we define  
\begin{eqnarray*}
R^{w'}=(\phi_\kappa^w)^{-1}(R_*+\lambda), 
\end{eqnarray*} 
where 
\begin{eqnarray*} 
\lambda = \left\{\begin{array}{ll}  \frac{-\hat{\epsilon}_{k-1}}{2} & m_k=\Le,  \\  \frac{\hat{\epsilon}_{k-1}}{2}, & m_k=\Ri,    
\end{array} \right.
\end{eqnarray*} 
and 
\begin{eqnarray*} 
\kappa = \left\{\begin{array}{ll} \Do, & n_k \in \{1,\ldots,\frac{N_k}{2} \}, \\  \Up, & n_k \in \{\frac{N_k}{2}+1,\ldots,N_k\}. 
\end{array} \right. 
\end{eqnarray*}
Disks $D^{w'}(\cdot)$ and segments $I^{w'}(\cdot),J^{w'}(\cdot,\cdot)$ are defined in a similar manner. See Figure \ref{fig:kuva2} for an illustration. 

%%%%%%%%%%%%%%%%%%%%%%%%%%%%%%%%%%%%%%%%%%%%%%%%%%%%%%%%%%%%%%%%%%%%%%%%%%%%%%%%%%%%%%%%%%%%%%%%%%%
%%%%%%%%%%%%%%%%%%%%%%%%%%%%%%%%%%%%%%%%%%%%%%%%%%%%%%%%%%%%%%%%%%%%%%%%%%%%%%%%%%%%%%%%%%%%%%%%%%%

\begin{figure}
    \centering
    
\begin{tikzpicture}
\draw (-4.4,-2.5) -- (-.4,-2.5);
\draw (-4.4,2.5) -- (-.4,2.5);
\draw (4.4,-2.5) -- (.4,-2.5);
\draw (4.4,2.5) -- (.4,2.5);

\draw (-4.4,-1.23) -- (-4,-1.23); 
\draw (-3.885,-1.23) -- (-3.485,-1.23); 
\draw (-3.37,-1.23) -- (-2.97,-1.23); 
\draw (-2.85,-1.23) -- (-2.45,-1.23); 
\draw (-2.34,-1.23) -- (-1.94,-1.23); 
\draw (-1.82 ,-1.23) -- (-1.42,-1.23); 
\draw (-1.3,-1.23) -- (-.9,-1.23); 
\draw (-.8,-1.23) -- (-.4,-1.23); 

\draw (4.4,-1.23) -- (4,-1.23); 
\draw (3.885,-1.23) -- (3.485,-1.23); 
\draw (3.37,-1.23) -- (2.97,-1.23); 
\draw (2.85,-1.23) -- (2.45,-1.23); 
\draw (2.34,-1.23) -- (1.94,-1.23); 
\draw (1.82 ,-1.23) -- (1.42,-1.23); 
\draw (1.3,-1.23) -- (.9,-1.23); 
\draw (.8,-1.23) -- (.4,-1.23); 

\draw (-4.4,-1.17) -- (-4,-1.17); 
\draw (-3.885,-1.17) -- (-3.485,-1.17); 
\draw (-3.37,-1.17) -- (-2.97,-1.17); 
\draw (-2.85,-1.17) -- (-2.45,-1.17); 
\draw (-2.34,-1.17) -- (-1.94,-1.17); 
\draw (-1.82 ,-1.17) -- (-1.42,-1.17); 
\draw (-1.3,-1.17) -- (-.9,-1.17); 
\draw (-.8,-1.17) -- (-.4,-1.17); 

\draw (4.4,-1.17) -- (4,-1.17); 
\draw (3.885,-1.17) -- (3.485,-1.17); 
\draw (3.37,-1.17) -- (2.97,-1.17); 
\draw (2.85,-1.17) -- (2.45,-1.17); 
\draw (2.34,-1.17) -- (1.94,-1.17); 
\draw (1.82 ,-1.17) -- (1.42,-1.17); 
\draw (1.3,-1.17) -- (.9,-1.17); 
\draw (.8,-1.17) -- (.4,-1.17);

\draw (-4.4,1.23) -- (-4,1.23); 
\draw (-3.885,1.23) -- (-3.485,1.23); 
\draw (-3.37,1.23) -- (-2.97,1.23); 
\draw (-2.85,1.23) -- (-2.45,1.23); 
\draw (-2.34,1.23) -- (-1.94,1.23); 
\draw (-1.82 ,1.23) -- (-1.42,1.23); 
\draw (-1.3,1.23) -- (-.9,1.23); 
\draw (-.8,1.23) -- (-.4,1.23); 

\draw (4.4,1.23) -- (4,1.23); 
\draw (3.885,1.23) -- (3.485,1.23); 
\draw (3.37,1.23) -- (2.97,1.23); 
\draw (2.85,1.23) -- (2.45,1.23); 
\draw (2.34,1.23) -- (1.94,1.23); 
\draw (1.82 ,1.23) -- (1.42,1.23); 
\draw (1.3,1.23) -- (.9,1.23); 
\draw (.8,1.23) -- (.4,1.23); 

\draw (-4.4,1.17) -- (-4,1.17); 
\draw (-3.885,1.17) -- (-3.485,1.17); 
\draw (-3.37,1.17) -- (-2.97,1.17); 
\draw (-2.85,1.17) -- (-2.45,1.17); 
\draw (-2.34,1.17) -- (-1.94,1.17); 
\draw (-1.82 ,1.17) -- (-1.42,1.17); 
\draw (-1.3,1.17) -- (-.9,1.17); 
\draw (-.8,1.17) -- (-.4,1.17); 

\draw (4.4,1.17) -- (4,1.17); 
\draw (3.885,1.17) -- (3.485,1.17); 
\draw (3.37,1.17) -- (2.97,1.17); 
\draw (2.85,1.17) -- (2.45,1.17); 
\draw (2.34,1.17) -- (1.94,1.17); 
\draw (1.82 ,1.17) -- (1.42,1.17); 
\draw (1.3,1.17) -- (.9,1.17); 
\draw (.8,1.17) -- (.4,1.17); 

\draw (-3.94,-1.72) circle (.454);
\draw (-3.94+1.04,-1.72) circle (.454);
\draw (-3.94+2.08,-1.72) circle (.454);
\draw (-3.94+3.12,-1.72) circle (.454);

\draw (-3.94+4.79,-1.72) circle (.454);
\draw (-3.94+5.83,-1.72) circle (.454);
\draw (-3.94+6.87,-1.72) circle (.454);
\draw (-3.94+7.91,-1.72) circle (.454);

\draw (-3.94,-1.72+1.04) circle (.454);
\draw (-3.94+1.04,-1.72+1.04) circle (.454);
\draw (-3.94+2.08,-1.72+1.04) circle (.454);
\draw (-3.94+3.12,-1.72+1.04) circle (.454);

\draw (-3.94+4.79,-1.72+1.04) circle (.454);
\draw (-3.94+5.83,-1.72+1.04) circle (.454);
\draw (-3.94+6.87,-1.72+1.04) circle (.454);
\draw (-3.94+7.91,-1.72+1.04) circle (.454);

\draw (-3.94,-1.72+2.4) circle (.454);
\draw (-3.94+1.04,-1.72+2.4) circle (.454);
\draw (-3.94+2.08,-1.72+2.4) circle (.454);
\draw (-3.94+3.12,-1.72+2.4) circle (.454);

\draw (-3.94+4.79,-1.72+2.4) circle (.454);
\draw (-3.94+5.83,-1.72+2.4) circle (.454);
\draw (-3.94+6.87,-1.72+2.4) circle (.454);
\draw (-3.94+7.91,-1.72+2.4) circle (.454);

\draw (-3.94,-1.72+1.04+2.4) circle (.454);
\draw (-3.94+1.04,-1.72+1.04+2.4) circle (.454);
\draw (-3.94+2.08,-1.72+1.04+2.4) circle (.454);
\draw (-3.94+3.12,-1.72+1.04+2.4) circle (.454);

\draw (-3.94+4.79,-1.72+1.04+2.4) circle (.454);
\draw (-3.94+5.83,-1.72+1.04+2.4) circle (.454);
\draw (-3.94+6.87,-1.72+1.04+2.4) circle (.454);
\draw (-3.94+7.91,-1.72+1.04+2.4) circle (.454);

\end{tikzpicture}

    \caption{The next generation inside a rectangle $R^w$. The figure is rotated by $90$ degrees, and the parameters in the actual construction are different. }
    \label{fig:kuva2}
\end{figure}
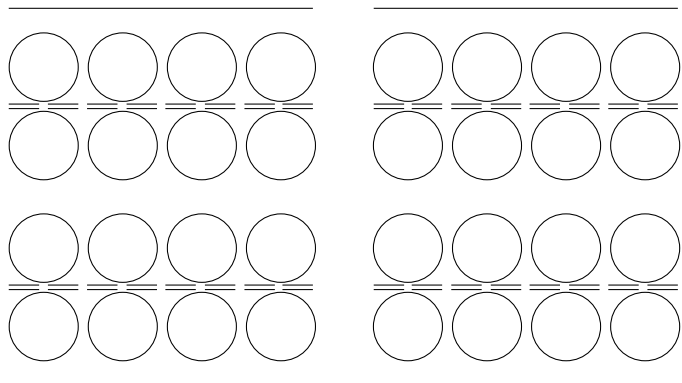

%%%%%%%%%%%%%%%%%%%%%%%%%%%%%%%%%%%%%%%%%%%%%%%%%%%%%%%%%%%%%%%%%%%%%%%%%%%%%%%%%%%%%%%%%%%%%%%%%%%
%%%%%%%%%%%%%%%%%%%%%%%%%%%%%%%%%%%%%%%%%%%%%%%%%%%%%%%%%%%%%%%%%%%%%%%%%%%%%%%%%%%%%%%%%%%%%%%%%%%

As discussed above, one can apply \eqref{eq:seas} to check that all the sets lie in $R^w$. The sets can be represented as in \eqref{eq:rectcoord}, \eqref{eq:centers}, \eqref{eq:wilding} above, changing $w$ with $w'$ and 
lengths $t_{k-1},r_{k-1},\delta_{k-1}$ with $t_k,r_k,\delta_k$. We record some consequences of \eqref{eq:inichoice} and \eqref{eq:seas} for future reference: if $k \in \mathbb{N}$, then 
\begin{equation} \label{eq:seasuusi} 
t_k \leq \frac{\delta_k}{100} \leq \frac{1}{100} \quad \text{and} \quad \delta_k \leq  \exp(-(2N_k)^{2k}) r_k\leq \exp(-(2N_k)^{2k}).  
\end{equation}
If moreover $k \geq 2$, then 
\begin{equation} \label{eq:kaksuusi} 
N_{k} \geq 20 N_{k-1}\geq 20N_1=40, \quad  r_k \leq \frac{\delta_{k-1}}{100} \leq 1 \quad \text{and} \quad t_{k}\leq \frac{t_{k-1}}{100}. 
\end{equation} 
The last inequality in \eqref{eq:kaksuusi} follows from \eqref{eq:seas} by noticing that $\frac{t_k}{t_{k-1}}=\frac{\epsilon_k}{\hat{\epsilon}_{k-1}}$. 

%%%%%%%%%%%%%%%%%%%%%%%%%%%%%%%%%%%%%%%%%%%%%%%%%%%%%%%%%%%%%%%%%%%%%%%%%%%%%%%%%%%%%%%%%%%%%%%%%%%
%%%%%%%%%%%%%%%%%%%%%%%%%%%%%%%%%%%%%%%%%%%%%%%%%%%%%%%%%%%%%%%%%%%%%%%%%%%%%%%%%%%%%%%%%%%%%%%%%%%

\subsection{Definition of $\Omega$}

We carry out the above construction for every $w \in \mathcal{W}:=\cup_k \mathcal{W}_k$, and define a circle domain 
$\Omega$ as follows:  
\begin{eqnarray*}
\nonumber \hatc \setminus \Omega &=& \Big(\bigcup_{w \in \mathcal{W}} (D^w(\Le) \cup D^w(\Ri))\Big) \cup  \bigcap_{k=1}^\infty \Big(\bigcup_{w \in \mathcal{W}_k} R^w \Big) \\ 
&=& \Big(\bigcup_{w \in \mathcal{W}} (D^w(\Le) \cup D^w(\Ri))\Big) \cup  \Big( E \times F \Big),  
\end{eqnarray*}
where $E \times F$ is the product of Cantor sets $E \subset [-3,3]$ and $F \subset [0,1]$. 
\begin{lemma} \label{lem:areazero}
The set $$E \times F=\bigcap_{k=1}^\infty \big(\bigcup_{w \in \mathcal{W}_k} R^w \big)$$ has Lebesgue measure zero.  
\end{lemma}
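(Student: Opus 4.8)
The plan is to estimate the Lebesgue measure (area) of the $k$th stage $\bigcup_{w\in\mathcal{W}_k}R^w$ of the construction and let $k\to\infty$. Since, by hypothesis, $E\times F=\bigcap_{k\ge1}\bigcup_{w\in\mathcal{W}_k}R^w$, it suffices to prove that the areas $\big|\bigcup_{w\in\mathcal{W}_k}R^w\big|$ tend to $0$, where $|\cdot|$ denotes Lebesgue measure on $\mathbb{C}=\mathbb{R}^2$. By the representation \eqref{eq:rectcoord} (with $t_{k-1},r_{k-1}$ replaced by $t_k,r_k$, as noted after \eqref{eq:kaksuusi}), every $R^w$ with $w\in\mathcal{W}_k$ is a closed rectangle with side lengths $2t_k$ and $2r_k$, so $|R^w|=4t_kr_k$. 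Hence, by subadditivity,
\begin{equation*}
\Big|\bigcup_{w\in\mathcal{W}_k}R^w\Big|\le 4t_kr_k\,|\mathcal{W}_k|.
\end{equation*}

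Next I would bound the two factors separately. For the cardinality, $|\mathcal{W}_k|=|\mathcal{M}_k|\cdot|\mathcal{N}_k|=2^k\prod_{j=1}^kN_j$ directly from the definitions; since $N_1<N_2<\cdots$ by \eqref{eq:kaksuusi}, we have $\prod_{j=1}^kN_j\le N_k^k$, so that $|\mathcal{W}_k|\le(2N_k)^k\le(2N_k)^{2k}$. For the size of the rectangles, assume $k\ge2$; then \eqref{eq:seasuusi} gives $t_k\le\delta_k/100$ and $\delta_k\le\exp(-(2N_k)^{2k})\,r_k$, while $r_k\le1$ by \eqref{eq:kaksuusi}, so
\begin{equation*}
4t_kr_k\le\frac{4}{100}\exp\!\big(-(2N_k)^{2k}\big)\,r_k^{\,2}\le\frac{1}{25}\exp\!\big(-(2N_k)^{2k}\big).
\end{equation*}
Combining these estimates,
\begin{equation*}
\Big|\bigcup_{w\in\mathcal{W}_k}R^w\Big|\le\frac{1}{25}\,(2N_k)^{2k}\exp\!\big(-(2N_k)^{2k}\big)\qquad(k\ge2).
\end{equation*}
Writing $x_k=(2N_k)^{2k}$, we have $x_k\to\infty$ because $N_k\ge40$ by \eqref{eq:kaksuusi}, and $x_ke^{-x_k}\to0$ as $x_k\to\infty$; hence the right-hand side tends to $0$, and therefore $|E\times F|=0$.

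The only point requiring attention is that the number of rectangles at stage $k$ must not grow too quickly: I would control it by $(2N_k)^{2k}$, which is merely polynomial in the quantity $x_k=(2N_k)^{2k}$ that governs the exponentially small area $\exp(-x_k)$ of each individual rectangle, so that $x_ke^{-x_k}$ still vanishes. This polynomial-versus-exponential gap is precisely what the choice $a_k/s_k=\exp(-(2N_k)^{2k})$ in \eqref{eq:seas} was engineered to produce, so I do not expect any genuine obstacle beyond bookkeeping of cardinalities and constants.
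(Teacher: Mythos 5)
Your proof is correct, but it takes a genuinely different route from the paper's. You estimate the two-dimensional area of the $k$-th stage $\bigcup_{w\in\mathcal{W}_k}R^w$ directly: count the rectangles via $|\mathcal{W}_k|=2^k\prod_{j=1}^kN_j\le(2N_k)^{2k}$ and bound each rectangle's area $4t_kr_k\le\frac{1}{25}\exp(-(2N_k)^{2k})$, exploiting the deliberately exponential smallness of $\delta_k/r_k$ from \eqref{eq:seas}, so the total area is dominated by $x_ke^{-x_k}$ with $x_k=(2N_k)^{2k}\to\infty$. The paper instead observes that $E\subset\mathbb{R}$ is covered by $2^k$ intervals of length $2t_k$, that $t_1\le 1/100$ and $t_{k+1}\le t_k/100$ by \eqref{eq:seasuusi}--\eqref{eq:kaksuusi} force $2^{k+1}t_k\to0$ so $E$ has zero one-dimensional length, and then applies Fubini's theorem with the (automatically finite-length) set $F$. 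The paper's argument is noticeably shorter and isolates the genuinely relevant feature of the construction (the horizontal thinness encoded in $t_k$ alone); your argument carries along the $N_k$-growth and the vertical geometry ($r_k$, $\delta_k$), which turns out to be more bookkeeping than necessary here, though it does make explicit why the choice $a_k/s_k=\exp(-(2N_k)^{2k})$ comfortably beats the polynomial cardinality growth $(2N_k)^{2k}$. Both proofs are valid.
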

\begin{proof}
Given $k \in \mathbb{N}$, the Cantor set $E$ is covered by $2^k$ intervals of length $2t_{k}$. By \eqref{eq:seasuusi} we have $t_1 \leq \frac{1}{100}$, and by \eqref{eq:kaksuusi} we have $t_{k+1}\leq \frac{t_{k}}{100}$ for all $k \in \mathbb{N}$. Since $2^{k+1} 100^{-k} \to 0$ as $k \to \infty$, $E$ has zero length. The claim follows by Fubini's theorem. 
\end{proof}

%%%%%%%%%%%%%%%%%%%%%%%%%%%%%%%%%%%%%%%%%%%%%%%%%%%%%%%%%%%%%%%%%%%%%%%%%%%%%%%%%%%%%%%%%%%%%%%%%%%
%%%%%%%%%%%%%%%%%%%%%%%%%%%%%%%%%%%%%%%%%%%%%%%%%%%%%%%%%%%%%%%%%%%%%%%%%%%%%%%%%%%%%%%%%%%%%%%%%%%

\subsection{Non-removability of $\partial \Omega$ }
In this section we apply Theorem \ref{thm:Wu} to show that $\partial \Omega$ is conformally non-removable. 

\begin{theorem} \label{thm:removable}  
The boundary of $\Omega$ is conformally non-removable. 
\end{theorem}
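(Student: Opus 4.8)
\textbf{Proof plan for Theorem \ref{thm:removable}.}

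The plan is to verify the hypotheses of Theorem \ref{thm:Wu} for the Cantor sets $E$ and $F$ arising in the construction of $\Omega$, and then to deduce non-removability of the larger set $\partial\Omega$ from the non-removability of $E\times F$. Since $\partial\Omega \supset E\times F$, the first task is to show that $F$ satisfies the capacity condition \eqref{eq:logcapa} for some interval $[a,b]$; the second task is to upgrade non-removability of $E\times F$ to non-removability of $\partial\Omega = (E\times F)\cup\bigcup_w(D^w(\Le)\cup D^w(\Ri))$, which requires a short argument because adding a countable union of disks to a non-removable set must be seen to preserve non-removability.

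For the first task, recall that $F\subset[0,1]$ is the Cantor set in the imaginary variable obtained as $\bigcap_k$ of the unions of the vertical projections of the rectangles $R^w$. At each stage $k$, an interval of the $(k-1)$st generation of length $r_{k-1}$ (corresponding to $I^w_*$) has a middle open segment of relative length $\delta_{k-1}/r_{k-1}$ removed, and then is subdivided into $N_k$ subintervals by removing $N_k-1$ gaps, each of relative length $a_k/s_k$; by \eqref{eq:seasuusi} and \eqref{eq:seas} these relative gap sizes are summably small, in fact $\delta_k/r_k \le \exp(-(2N_k)^{2k})$ and $a_k/s_k = \exp(-(2N_k)^{2k})$. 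The point is that $F$ is ``thick'': the total relative length removed at every scale is tiny. The standard criterion (see e.g. \cite[Ch. 9]{Pom92} or the original \cite{AhlBeu50}) is that a linear Cantor set $F$ obtained from an interval $[a,b]$ by removing, at generation $k$, gaps of total relative length $\eta_k$ inside each surviving interval satisfies $\capa([a,b]\setminus F) < \capa([a,b])$ as soon as $\sum_k \eta_k < \infty$ (indeed the complement then has positive capacity but the deleted set is ``capacity-deficient'' relative to the whole interval); here $\sum_k(N_k \tfrac{a_k}{s_k} + \tfrac{\delta_{k-1}}{r_{k-1}})$ converges by the super-exponential decay in \eqref{eq:seas}. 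I would make this quantitative by a Frostman-type lower bound on the capacity of $F$ together with the elementary upper bound $\capa([a,b]\setminus F)\le \diam([a,b]\setminus F)$ is too crude, so instead I would use monotonicity and the fact that a dyadic interval of length $r_k$ meeting $F$ contains a subinterval of comparable length disjoint from all gaps removed up to stage $k$, hence $\capa(F)$ is bounded below by a constant times $\lim_k 2^{-k}\prod(\text{ratios})>0$ while $\capa([a,b])=\tfrac14|b-a|$, and then invoke the strict subadditivity that fails only when $F$ has full capacity in $[a,b]$, which thickness rules out. Consequently \eqref{eq:logcapa} holds, and Theorem \ref{thm:Wu} applies: $E\times F$ is conformally non-removable.

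For the second task, I would argue that if $h:\hatc\to\hatc$ is a homeomorphism conformal on $\hatc\setminus\partial\Omega$, then in particular $h$ is conformal on $\hatc\setminus((E\times F)\cup\bigcup_w(D^w(\Le)\cup D^w(\Ri)))$; but each disk $D^w(\cdot)$ is a nondegenerate closed disk, and $h$ restricted to the open disk $\operatorname{int}D^w(\cdot)$ is a homeomorphism onto a Jordan domain that is conformal there. The key observation is that a countable union of disks (together with a set of zero area, by Lemma \ref{lem:areazero}) is itself conformally removable: closed disks have conformally removable boundary circles, and a countable union of removable sets whose union is closed is removable; more directly, any homeomorphism of $\hatc$ conformal off $(E\times F)\cup\bigcup_w \partial D^w$ agrees, on each component of the complement of $E\times F$ that is not one of the disks, with a conformal map, and by the removability of round circles one extends conformality across each $\partial D^w$, so $h$ is conformal off $E\times F$. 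Since $E\times F$ is non-removable by Theorem \ref{thm:Wu}, there exists such an $h$ that is \emph{not} a M\"obius transformation — and this same $h$ witnesses the non-removability of the larger set $\partial\Omega$. (One must take the $h$ produced by Wu's construction, which is conformal precisely off $E\times F$, hence a fortiori conformal off $\partial\Omega$.)

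The main obstacle I anticipate is the first task: verifying \eqref{eq:logcapa} cleanly, i.e., extracting from the super-exponential smallness of the relative gap sizes $a_k/s_k$ and $\delta_k/r_k$ a genuine strict inequality of logarithmic capacities rather than a mere ``largeness'' of $F$. The cleanest route is probably to cite the Ahlfors--Beurling framework directly — their result gives, under a summable-gap hypothesis, a nonconstant conformal embedding of $\hatc\setminus(\{0\}\times F)$, which is exactly the analytic input Wu's theorem needs — thereby bundling the capacity estimate into the hypothesis of Theorem \ref{thm:Wu} as stated. The second task is comparatively routine once one recalls that round circles and countable unions (with closed union) of removable sets are removable, but it does need to be spelled out so that the $h$ one produces is genuinely a counterexample for $\partial\Omega$ and not merely for $E\times F$.
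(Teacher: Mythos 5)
Your overall plan matches the paper's: verify condition \eqref{eq:logcapa} for $F$, apply Theorem \ref{thm:Wu} to conclude that $E\times F$ is non-removable, and then pass to the larger set $\partial\Omega$. However, both of your two ``tasks'' contain problems worth flagging.

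\textbf{The capacity estimate.} This is the substantive part of the proof and you leave it unresolved, offering instead a menu of possible routes (Frostman lower bounds, a ``standard criterion'' involving summable relative gap lengths, and citing Ahlfors--Beurling directly). The ``standard criterion'' you state---that $\sum_k \eta_k<\infty$ for relative gap lengths $\eta_k$ implies $\capa([a,b]\setminus F)<\capa([a,b])$---is not stated with a precise reference and your parenthetical justification is confused (positivity of $\capa([a,b]\setminus F)$ is irrelevant; what one needs is \emph{strict} smallness). The paper avoids all of this by estimating $\capa([0,1]\setminus F)$ \emph{directly from above} via the reciprocal-logarithm subadditivity inequality \eqref{eq:niisku}, together with the exact formula $\capa([c,d])=(d-c)/4$ from \eqref{eq:interval}: one writes $[0,1]\setminus F$ as the union of the explicitly enumerated removed intervals, bounds their number at level $k$ by $2N_k^k$ and their lengths by $\exp(-(2N_k)^{2k})$, and feeds this into \eqref{eq:niisku} to get $\capa([0,1]\setminus F)\leq 2e^{-4}<1/4$. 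This is elementary, quantitative, and requires no auxiliary theory; you should use it rather than reaching for Ahlfors--Beurling or Pólya--Szegő machinery that the paper never invokes.

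\textbf{Passing from $E\times F$ to $\partial\Omega$.} You correctly identify at the very end of your argument that the map $h$ coming from Wu's theorem---conformal off $E\times F$, not M\"obius---is a fortiori conformal off the superset $\partial\Omega$, and hence witnesses non-removability of $\partial\Omega$. That observation is the whole story: for compact $K_1\subset K_2$, any homeomorphism of $\hatc$ conformal on $\hatc\setminus K_1$ is automatically conformal on the smaller open set $\hatc\setminus K_2$, so non-removability is monotone under inclusion. The preceding paragraph of your proposal, however, argues in the \emph{wrong logical direction}: you start from an $h$ conformal off $\partial\Omega$ and try to show it is conformal off $E\times F$ by invoking removability of circles. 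That implication, even if correct, is exactly what one would want to prove \emph{removability} of $\partial\Omega$ given removability of $E\times F$; it contributes nothing to the non-removability you are after, and the digression about countable unions of removable sets and Jordan domains is a red herring. Delete it and keep only the one-line monotonicity observation.
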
 

\begin{proof} 
We will prove Condition \eqref{eq:logcapa} for set $F$ and $[a,b]=[0,1]$, i.e., 
$$
\capa([0,1] \setminus F) < \capa([0,1]).  
$$ 
Theorem \ref{thm:Wu} then gives the desired conclusion. 
The construction of $\Omega$ can in fact be carried out so that $\capa([0,1] \setminus F)$ is smaller than any predetermined $\epsilon>0$, so crude estimates are sufficient below. 

Recall the following properties of the logarithmic capacity: 
\begin{itemize}
\item[(i)] For intervals $[c,d] \subset \mathbb{R}$ we have 
\begin{equation} 
\label{eq:interval} 
\capa([c,d]) = \frac{d-c}{4} \quad \quad \text{(see \cite[p. 207]{Pom92})}.  
\end{equation} 
\item[(ii)] If $E_\ell$ are Borel sets and if $E=\cup_{\ell=1}^\infty E_\ell$ satisfies $\operatorname{diam} E < \delta$, then 
\begin{equation} \label{eq:niisku}
\frac{1}{\log \frac{\delta}{\capa(E)}} \leq \sum_{\ell=1}^\infty \frac{1}{\log \frac{\delta}{\capa(E_\ell)}}  \quad \quad \text{(see \cite[Cor. 9.13]{Pom92})}. 
\end{equation} 
\end{itemize}

The set $[0,1] \setminus F$ is the union of the removed intervals: 
$$
[0,1] \setminus F=  \Big(\cup_{\ell=1}^3 \Delta(\ell) \Big) \cup  \Big(\bigcup_{k=2}^\infty  \Big( \bigcup_{\tilde{n} \in \mathcal{N}_{k}}  \bigcup_{\ell=1}^{2N_k-2} \Delta(\tilde{n},\ell) \Big)\Big).  
$$

Here $\Delta(\ell)$ is the projection to the imaginary axis of one of the three intervals removed from $-2-\epsilon_1+i[0,1]$ in the first step of the construction, and $\Delta(\tilde{n},\ell)$ is the projection to the imaginary axis of 
one of the $2N_k-2$ intervals removed from $I^w(\Le)$ in the $k$:th step, for any $w=(\tilde{m},\tilde{n})$. The combined cardinality of segments $\Delta(\tilde{n},\ell)$, $\tilde{n} \in \mathcal{N}_k$, is 
$$\#_k=2(N_k-1)\Pi_{j=1}^{k-1}N_j. 
$$ 
Since $N_{j} \leq N_k$ for all $j \leq k$ by \eqref{eq:kaksuusi}, it follows that  
$\#_k \leq 2N_k^k$. By \eqref{eq:seasuusi}, the length $\delta_k$ such segments is bounded from above by 
$\exp(-(2N_k)^{2k})$.

We recall that $\operatorname{diam}([0,1]\setminus F)=1$. Since $N_k \geq 20N_{k-1}$ and $N_1=2$ by \eqref{eq:kaksuusi}, we can apply the above estimates together with \eqref{eq:interval} and 
\eqref{eq:niisku} to conclude that  
\begin{equation} \label{eq:jinta}  
\frac{1}{\log \frac{2}{\capa([0,1]\setminus F)}} \leq \sum_{k=1}^\infty \frac{2N_k^k}{\log (8\exp((2N_k)^{2k})} \leq \frac{1}{4}. 
\end{equation}
Combining \eqref{eq:interval} and \eqref{eq:jinta} yields  
$$
\capa([0,1]\setminus F) \leq 2\exp(-4) < \frac{1}{4}=\capa([0,1]), 
$$
as desired. The proof is complete. 
\end{proof}

%%%%%%%%%%%%%%%%%%%%%%%%%%%%%%%%%%%%%%%%%%%%%%%%%%%%%%%%%%%%%%%%%%%%%%%%%%%%%%%%%%%%%%%%%%%%%%%%%%%
%%%%%%%%%%%%%%%%%%%%%%%%%%%%%%%%%%%%%%%%%%%%%%%%%%%%%%%%%%%%%%%%%%%%%%%%%%%%%%%%%%%%%%%%%%%%%%%%%%%

\section{Proof of Theorem \ref{thm:main}: Rigidity of $\Omega$ }
In this section we prove the rigidity of $\Omega$.  

\begin{theorem}\label{thm:rigidity} 
Domain $\Omega$ is rigid; every conformal map $f:\Omega \to \Omega'$ onto another circle domain $\Omega'$ is the restriction of a M\"obius transformation. 
\end{theorem}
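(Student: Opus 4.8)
The plan is to show that any conformal map $f:\Omega \to \Omega'$ between circle domains is a M\"obius transformation by first extending $f$ to a global homeomorphism of $\hatc$ and then verifying the hypotheses of Ntalampekos' metric characterization of M\"obius transformations from \cite{Nta24a}. The extension step uses the classical Schottky reflection argument: since each complementary component of $\Omega$ which is a disk $D^w(\cdot)$ is mapped by $f$ onto a round disk in $\Omega'$, one reflects $f$ across the boundary circles of these disks (and their $f$-images) to obtain a conformal map on a larger domain. Iterating this reflection across the nested families of disks $D^w(\Le), D^w(\Ri)$, $w \in \mathcal{W}$, produces a homeomorphic extension $F:\hatc \to \hatc$ which is conformal off the residual set; the residual set obtained in the limit of the reflections is contained in a set built from $E \times F$ and the orbit of $E \times F$ under the Schottky group, and one checks it has measure zero (this will follow from area estimates analogous to Lemma \ref{lem:areazero}, using that the disks shrink rapidly). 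So $F$ is conformal off a set of measure zero and hence $1$-quasiconformal where it is locally defined; the subtlety is that $F$ need not a priori be quasiconformal globally near $E \times F$.

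The heart of the matter is to control the geometry of $f$ near the non-removable part $E \times F$ of the boundary, and this is where the carefully engineered parameters of Section \ref{sec:const} enter. The idea, as flagged in the introduction, is that $f$ must have bounded \emph{eccentric distortion}: roughly, the images under $f$ of the thin rectangles $R^w$ cannot be too distorted, because the Cantor set $E$ (controlled by the $\epsilon_k$, hence the $t_k$) is ``thinner than $F$ is thick'' in a quantitative, conformally invariant sense. Concretely, I would estimate the conformal modulus of the ring domains and quadrilaterals formed by the rectangles $R^w$ together with the separating disks $D^w(\Le), D^w(\Ri)$, and of their $f$-images; the last inequality $t_k \le t_{k-1}/100$ in \eqref{eq:kaksuusi}, the bound $t_k \le \delta_k/100$ in \eqref{eq:seasuusi}, and the enormous gap $\delta_k \le \exp(-(2N_k)^{2k}) r_k$ should force the images of the $R^w$ to be contained in controlled neighborhoods of round configurations, so that $f$ extends with uniformly bounded eccentric distortion across $E \times F$. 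The promised modulus estimates of Section \ref{sec:modusec} are exactly the tool here. This is the step I expect to be the main obstacle: one must show that the reflection-extended map $F$ satisfies the precise metric hypothesis of \cite{Nta24a} (bounded eccentric distortion, or an analogous metric condition) on the \emph{whole} sphere, including across $E \times F$, and this requires translating the combinatorial/metric smallness of the $t_k$ relative to the $\delta_k$ into an honest distortion bound for $f$.

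Once $F:\hatc \to \hatc$ is a homeomorphism, conformal off a measure-zero set, and is verified to have bounded eccentric distortion (equivalently, satisfies the metric assumption of Ntalampekos' theorem), I would invoke \cite{Nta24a} directly: such a map is a M\"obius transformation. Restricting back to $\Omega$ gives that $f$ is the restriction of a M\"obius transformation, which is the assertion of Theorem \ref{thm:rigidity}. A couple of bookkeeping points to handle along the way: one should first normalize (by postcomposing $f$ with a M\"obius map) so that, say, three of the disks are fixed, to pin down the Schottky group and make the reflection orbit concrete; and one should confirm that the complementary \emph{points} of $\Omega$ (the degenerate components, if any, beyond $E \times F$) are mapped to points and cause no trouble in the extension. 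But the conceptual work is entirely in the modulus/eccentric-distortion estimate near $E \times F$; the reflection extension and the final appeal to \cite{Nta24a} are, by comparison, routine.
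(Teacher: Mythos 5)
Your strategy matches the paper's in all its essential ingredients: extend $f$ to a global homeomorphism $f_*$ of $\hatc$ via Schottky reflections, show $f_*$ has bounded eccentric distortion across $E \times F$ by exploiting the modulus estimates built into the parameter choices, and invoke Ntalampekos' metric characterization. Two points where the sketch glosses over something you would need to supply. First, the residual set $\hatc \setminus \Omega_*$ is not only the Schottky orbit $X$ of $E \times F$: it also contains the set $Y$ of accumulation points of infinite nested chains of reflected disks, which are ``pure Schottky'' limit points independent of the Cantor set. Since \cite[Theorem 1.2]{Nta24a} requires a pointwise bound on $E_{f_*}$, not an a.e.\ one, you must address $Y$; the paper does so cheaply by observing that the nested disks $B_j$ of \eqref{eq:ahja2} themselves witness $E_{f_*}(z_0)=1$ at every $z_0 \in Y$. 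Second, the precise device for the eccentric distortion bound is not an estimate on images of the rectangles $R^w$ (which meet $E \times F$ and so lie outside the domain of $f$), but Proposition~\ref{prop:surrounding}: for each $z$ and level $k$ one surrounds $z$ by a chain of at most six nearby disks joined by path families in $\Omega$ of modulus $\geq 1/10$, and conformal invariance forces the image chain to have diameter comparable to the largest image-disk radius, giving $\hat f(\{z\}) \subset 10^{30}\hat f(D_z^k)$ and hence $E_{f_*}(z)\leq 10^{30}$. Your intuition that $t_k \leq \delta_k/100$ (``$E$ thinner than $F$ is thick'') is what makes these connecting families fat is correct; also note that \cite[Theorem 1.2]{Nta24a} alone gives quasiconformality, and the paper additionally cites \cite[Lemma 2.5]{Nta23a} (together with $|X|=0$) to upgrade to conformality — your ``conformal off a measure-zero set'' remark captures the right reason but the attribution needs both references.
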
 
Theorem \ref{thm:main} follows from Theorems \ref{thm:removable} and \ref{thm:rigidity}. The key property towards Theorem \ref{thm:rigidity} is that we can surround every $z \in E \times F$ with unions of nearby disks and families of 
paths in $\Omega$ of large conformal modulus, see Section \ref{sec:modusec}. It follows (see Section \ref{sec:distortion}) that 
there is a sequence of disks $D^w(z)$ whose relative distances to $z$ are small both in the domain and 
after mapping with any conformal map from $\Omega$ onto another circle domain. 
Combining with the extension procedure in Sections \ref{sec:boundary} and \ref{sec:Schottky}, and with Ntalampekos' metric characterization of conformal maps 
(see Section \ref{sec:final}) shows that $f$ must be the restriction of a M\"obius transformation. 

%%%%%%%%%%%%%%%%%%%%%%%%%%%%%%%%%%%%%%%%%%%%%%%%%%%%%%%%%%%%%%%%%%%%%%%%%%%%%%%%%%%%%%%%%%%%%%%%%%%
%%%%%%%%%%%%%%%%%%%%%%%%%%%%%%%%%%%%%%%%%%%%%%%%%%%%%%%%%%%%%%%%%%%%%%%%%%%%%%%%%%%%%%%%%%%%%%%%%%%

\subsection{Conformal modulus}\label{sec:modudef}
The \emph{conformal modulus} $\modu(\Gamma)$ of a family of paths $\Gamma$ in $\mathbb{C}$ is 
$$
\modu(\Gamma)=\inf \int_{\mathbb{C}} \rho^2 \, dA, 
$$  
where the infimum is over all \emph{admissible functions}, i.e., Borel functions $\rho:\mathbb{C} \to [0,\infty]$ satisfying $\int_\gamma \rho \, ds \geq 1$ for all locally rectifiable $\gamma \in \Gamma$. We will apply the following basic properties, see e.g. \cite[Ch. I]{Ahl66}: 
\begin{itemize} 
\item[(i)] A sense-preserving homemorphism $f:U \to V$ between subdomains of $\hatc$ is conformal if and only if every path family $\Gamma$ on $U \cap \mathbb{C}$ satisfies %(here $f\Gamma=\{f\circ \gamma: \, \gamma \in \Gamma\}$) 
\begin{equation} 
\label{eq:invariance} 
\modu (f\Gamma)=\modu(\Gamma). 
\end{equation} 
 
\item[(ii)] If $\Gamma_1$ and $\Gamma_2$ are path families so that for every $\gamma_1 \in \Gamma_1$ there is a $\gamma_2 \in \Gamma_2$ which is a restriction of $\gamma_1$, then 
\begin{equation}\label{eq:mono} 
\modu(\Gamma_2) \geq \modu(\Gamma_1). 
\end{equation} 
\item[(iii)] If $\Gamma$ is the family of horizontal (vertical) segments connecting the vertical (horizontal) edges of rectangle $(\zeta,\zeta+t) \times (\xi,\xi+s)$, then 
\begin{equation} \label{eq:rectmodu}
\modu(\Gamma) = \frac{s}{t} \quad \quad \Big( \modu(\Gamma) = \frac{t}{s}  \Big). 
\end{equation} 
\item[(iv)] If $\Gamma$ is the family of circles $S(z_0,s)$, $s_1<s<s_2$, then  
\begin{equation} \label{eq:annmodu}
\modu(\Gamma)= \frac{\log \frac{s_2}{s_1}}{2\pi}. 
\end{equation} 
If $\Lambda$ is the family of paths joining $S(z_0,s_1)$ and $S(z_0,s_2)$, 
then 
\begin{equation} \label{eq:ann2modu}
\modu(\Lambda) = \frac{2 \pi} {\log \frac{s_2}{s_1}}. 
\end{equation}

\end{itemize}

%%%%%%%%%%%%%%%%%%%%%%%%%%%%%%%%%%%%%%%%%%%%%%%%%%%%%%%%%%%%%%%%%%%%%%%%%%%%%%%%%%%%%%%%%%%%%%%%%%%
%%%%%%%%%%%%%%%%%%%%%%%%%%%%%%%%%%%%%%%%%%%%%%%%%%%%%%%%%%%%%%%%%%%%%%%%%%%%%%%%%%%%%%%%%%%%%%%%%%%

\subsection{Neighboring disks} \label{sec:neighbors}
We say that $w=(\tilde{m},\tilde{n}n_k) \in \mathcal{W}_k$ is a \emph{bottom} if $n_k=1$ or $n_k=\frac{N_k}{2}+1$, and a \emph{top} if 
$n_k=\frac{N_k}{2}$ or $n_k=N_k$. Given $l=\pm 1$, 
we denote 
$$
w+l=(\tilde{m},\tilde{n}(n_k+l)). 
$$ 

We fix $z \in E \times F$ and $k \in \mathbb{N}$, and let $w=(\tilde{m},\tilde{n}n_k)$ be the element of $\mathcal{W}_k$ for which $z \in R^w$. The \emph{ordered} collection of the \emph{$k$:th level neighbors} of $z$ is  
\begin{equation} \label{eq:tiitos}
\mathfrak{N}_k(z)=\{D^{w-1}(\Ri),D^{w}(\Ri),D^{w+1}(\Ri),D^{w+1}(\Le),D^{w}(\Le),D^{w-1}(\Le)\} 
\end{equation} 
if $w$ is not a top or a bottom, 
\begin{equation}\label{eq:kiitos}
\mathfrak{N}_k(z)=\{D^{w}(\Ri),D^{w+1}(\Ri),D^{w+1}(\Le),D^{w}(\Le)\} 
\end{equation} 
if $w$ is a bottom, and 
$$\mathfrak{N}_k(z)=\{D^{w-1}(\Ri),D^{w}(\Ri),D^{w}(\Le),D^{w-1}(\Le)\}$$ if $w$ is a top. We determine a cyclic ordering: if 
$\mathfrak{N}_k(z)=\{D_1,\ldots,D_\ell\}$, then 
\begin{equation} \label{eq:order}
D_j<D_{j+1} \text{ for } j \in \{1,\ldots,\ell-1\}, \text{ and } D_\ell<D_1.  
\end{equation} 

We use the notation $\tau D:=\overline{\mathbb{D}}(z_0,\tau r)$ for a disk $D=\overline{\mathbb{D}}(z_0,r)$ and $\tau>0$. 

\begin{lemma} \label{lem:basic}
If $D \in \mathfrak{N}_k(z)$, then $z \in 4D$.  
\end{lemma}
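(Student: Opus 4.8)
The plan is to reduce the statement to a short distance estimate carried out inside one copy of the basic building block of Section~\ref{sec:block}, exploiting the self-similarity of the construction. The first thing I would record is that, by \eqref{eq:tiitos}--\eqref{eq:kiitos} and the definitions of \emph{top} and \emph{bottom}, every $D\in\mathfrak{N}_k(z)$ is one of the disks $D^{w'}(\sigma)$ with $w'\in\{w-1,w,w+1\}\cap\mathcal{W}_k$ and $\sigma\in\{\Le,\Ri\}$, and --- this is exactly the purpose of the top/bottom case split --- the last letters $n_k$ of $w$ and $n_k\pm1$ of $w'$ always lie in the same half, i.e.\ both in $\{1,\dots,\frac{N_k}{2}\}$ or both in $\{\frac{N_k}{2}+1,\dots,N_k\}$ (for $k=1$ one simply has $n_k\pm1\in\{1,2\}$). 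Writing $v\in\mathcal{W}_{k-1}$ for the parent of $w$ (or, for $k=1$, using the duplication step), it then follows that $R^w$ and all disks of $\mathfrak{N}_k(z)$ are the images, under one and the same homothety $\psi$ --- namely $\psi(\cdot)=(\phi^{v}_\kappa)^{-1}(\,\cdot\,+\lambda)$ when $k\ge2$ and $\psi(\cdot)=\cdot\pm2$ when $k=1$, in the notation of Section~\ref{sec:itera} --- of the rectangle $R^n$ and the disks $D^n(\Le),D^n(\Ri),D^{n\pm1}(\Le),D^{n\pm1}(\Ri)$ coming from a single application of the building block (with parameters $N_1,a_1,s_1,\epsilon_1$ if $k=1$ and $\frac{N_k}{2},a_k,s_k,\epsilon_k$ if $k\ge2$), for the appropriate index $n$.

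Since a homothety $\zeta\mapsto\alpha\zeta+\beta$ with $\alpha>0$ sends $\overline{\mathbb{D}}(z_0,r)$ to $\overline{\mathbb{D}}(\alpha z_0+\beta,\alpha r)$ and multiplies all distances by $\alpha$, the relation ``$p\in\tau D$'' is preserved by $\psi$, so it is enough to verify the claim in the building block: if $z=x+iy\in R^n=[-\epsilon,\epsilon]\times[y^n-s,y^n+s]$, then $z\in4D$ for each $D=D^m(\sigma)$ with $m\in\{n-1,n,n+1\}\cap\{1,\dots,N\}$ and $\sigma\in\{\Le,\Ri\}$. For $\sigma=\Ri$ the center of $D$ is $2\epsilon+s+iy^m$ and its radius is $s$; since $|x-(2\epsilon+s)|\le3\epsilon+s$ and, consecutive rectangle centers being $2s+a$ apart, $|y-y^m|\le3s+a$, we get $\dist(z,\text{center})^2\le(3\epsilon+s)^2+(3s+a)^2$, and $\sigma=\Le$ is the mirror image under $x\mapsto-x$. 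By \eqref{eq:inichoice} and \eqref{eq:seas} one has $\epsilon\le s/100$ and $a\le s/100$ (indeed $a/s=a_k/s_k$ is exponentially small for $k\ge2$), so the right-hand side is at most $(1.03\,s)^2+(3.01\,s)^2<(4s)^2$, which gives $z\in4D$ and finishes the argument.

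The estimate itself is routine; the only step that needs care is the reduction in the first paragraph, i.e.\ checking that the four or six disks of \eqref{eq:tiitos}--\eqref{eq:kiitos} never straddle two distinct building-block copies --- precisely what the top/bottom conventions are designed to ensure --- so that a single homothety transports the configuration around $z$ to the normalized one. Alternatively, one can avoid $\psi$ altogether and estimate directly in the level-$k$ coordinates \eqref{eq:rectcoord}--\eqref{eq:centers}, using $t_k\le\delta_k/100\le r_k/100$ and $\delta_k\le r_k/100$ from \eqref{eq:seasuusi}; this is the same computation written in unscaled coordinates.
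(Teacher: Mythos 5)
Your proof is correct and follows essentially the same route as the paper: a direct distance estimate to the worst-case corner of $R^w$, combined with the Pythagorean theorem and the smallness of the relative width and gap (in your normalized coordinates, $\epsilon,a\le s/100$; in the paper, $t_k,\delta_k\le r_k/100$ from \eqref{eq:seasuusi}). The paper carries the estimate out directly in the level-$k$ coordinates \eqref{eq:rectcoord}--\eqref{eq:centers} rather than pulling back to the building block, which is exactly the alternative you note in your final sentence, so the two arguments are the same computation in two coordinate systems. One small added value in your write-up is that you make explicit why the top/bottom conventions guarantee the disks of $\mathfrak{N}_k(z)$ all lie in a single building-block copy (equivalently, that the last letters stay in the same half $\{1,\dots,\frac{N_k}{2}\}$ or $\{\frac{N_k}{2}+1,\dots,N_k\}$), a point the paper uses implicitly.
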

\begin{proof}
Recall that the height of $R^w$ and radius of each $D \in \mathfrak{N}_k(z)$ are $2r_k$ and $r_k$, respectively. 
We assume that $w$ is not a bottom and $D=D^{w-1}(\Le)$; the other cases are proved similarly. The width of $R^{w-1}$ (and $R^{w}$) and distance between $D$ and $R^{w-1}$ are 
$2t_k$ and $t_k$, respectively. Thus, if the center of $D$ is $z_0$, then the distance between $z_0$ and the center $p_0$ of the right vertical edge $I^{w-1}(\Ri)$ of $R^{w-1}$ is $r_k+3t_k$. 
The distance between $p_0$ and the top right corner $q_0$ of $R^w$ is $3r_k+\frac{\delta_k}{2}$. Since $\max\{t_k,\delta_k \} \leq \frac{r_k}{100}$ by \eqref{eq:seasuusi}, we conclude from the Pythagorean theorem 
that   
$$
|z_0-z| \leq |z_0-q_0| \leq \Big((r_k+3t_k)^2+(3r_k+\frac{\delta_k}{2})^2 \Big)^{1/2} < 4r_k. 
$$
The proof is complete. 
\end{proof}

%%%%%%%%%%%%%%%%%%%%%%%%%%%%%%%%%%%%%%%%%%%%%%%%%%%%%%%%%%%%%%%%%%%%%%%%%%%%%%%%%%%%%%%%%%%%%%%%%%%
%%%%%%%%%%%%%%%%%%%%%%%%%%%%%%%%%%%%%%%%%%%%%%%%%%%%%%%%%%%%%%%%%%%%%%%%%%%%%%%%%%%%%%%%%%%%%%%%%%%

\subsection{Surrounding path families on $\Omega$} \label{sec:modusec}
If $A_1,A_2 \subset \mathbb{C}$ and if $U \subset \mathbb{C}$ is a domain, we say that a path $\gamma:[\alpha,\beta]\to \overline{U}$ \emph{connects $A_1$ and $A_2$ in $U$}, if $\gamma(\alpha) \in A_1$, 
$\gamma(\beta) \in A_2$, and $\gamma(t) \in U$ for all $\alpha<t<\beta$. In the following, $|\gamma|$ refers to the image of $\gamma$. We apply the notation of Section \ref{sec:neighbors} for the collection 
$\mathfrak{N}_k(z)$ of $k$:th level neighbors. 

\begin{proposition} \label{prop:surrounding} 
For every $z \in E \times F$ and $k \in \mathbb{N}$ there are families $\Gamma_j$, $j \in \{1, \ldots, \ell\}$, of paths connecting $D_j$ and $D_{j+1}\in \mathfrak{N}_k(z)$ in 
$\Omega \setminus \{\infty\}$ so that 
\begin{itemize} 
\item[(i)] $\modu(\Gamma_j) \geq \frac{1}{10}$ for every $j \in \{1, \ldots, \ell\}$, and  
\item[(ii)] if $\gamma_j \in \Gamma_j$ then $\cup_{j=1}^\ell (|\gamma_j| \cup D_j)$ separates $z$ from $\infty$.  
\end{itemize} 
\end{proposition}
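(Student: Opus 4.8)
The plan is to construct the path families $\Gamma_j$ explicitly using the geometry of the building block, and to verify (i) via the standard rectangle/annulus modulus estimates in Section \ref{sec:modudef}, and (ii) via an elementary topological argument. Fix $z \in E\times F$ and $k$, and let $w=(\tilde m,\tilde n n_k)\in\mathcal W_k$ be the word with $z\in R^w$. The key observation is that the $k$:th level neighbors in $\mathfrak N_k(z)$ are arranged cyclically around $R^w$: going clockwise (say) one passes $D^{w-1}(\Ri)$, $D^w(\Ri)$, $D^{w+1}(\Ri)$, then crosses ``below'' or ``above'' $R^w$ through the thin region where the removed segment of length $\delta_k$ sits (or, in the top/bottom cases, past the short ends of the rectangles), and comes back up the left side through $D^{w+1}(\Le)$, $D^w(\Le)$, $D^{w-1}(\Le)$. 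Between two consecutive disks $D_j<D_{j+1}$ there is a short ``channel'' in $\Omega$ — either a thin rectangular slit (the gap of width $\sim t_k$ between two vertically adjacent disks on the same side, or between a disk and the corresponding vertical segment $I^w(\cdot)$) or a larger region near the top/bottom of $R^w$ — and I would take $\Gamma_j$ to be a family of curves running across this channel.

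First I would treat the ``generic'' adjacency, e.g. $D_j=D^w(\Ri)$, $D_{j+1}=D^{w+1}(\Ri)$: these are two disks of radius $r_k$ on the same side, whose centers are at imaginary-coordinate distance equal to the spacing $2s_k$ between consecutive subrectangles in the building block, and the obstruction between them (the vertical segment pieces $I^w(\Ri)$, $I^{w+1}(\Ri)$ and their ``children'') lies on the line $\{\operatorname{Re}=x^{\tilde m}+t_{k-1}\}$, while the disks are centered at real coordinate $x^{\tilde m}+t_{k-1}+2t_k+r_k$ or the reflected value. So there is a genuine open rectangular corridor in $\Omega$, of width comparable to $r_k$ (hence to its height) once one uses $t_k,\delta_k\le r_k/100$ from \eqref{eq:seasuusi}, and applying \eqref{eq:rectmodu} to the subfamily of straight segments crossing it from $\partial D_j$ to $\partial D_{j+1}$ gives modulus bounded below by an absolute constant, certainly $\ge \tfrac1{10}$ after choosing the corridor's dimensions generously; monotonicity \eqref{eq:mono} then passes the bound to the full $\Gamma_j$. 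The adjacency across $R^w$ — from $D^{w+1}(\Ri)$ to $D^{w+1}(\Le)$ going around the bottom (or top) of $R^w$, and the symmetric one, as well as the analogous moves in the top/bottom cases of \eqref{eq:kiitos} — is handled the same way but through the wider region below $I^w(\Do)$-type segments; here one can afford an even cruder estimate, e.g. inscribe a half-annulus or a fixed-shape rectangle avoiding $E\times F$ (which is contained in $\cup_{w'}R^{w'}$, all strictly inside $R^w$ and away from the bottom edge by a definite amount) and use \eqref{eq:ann2modu} or \eqref{eq:rectmodu}. One must also check the curves avoid the disks $D^{w'}(\cdot)$ and rectangles $R^{w'}$ of \emph{other} words $w'$, but by the disjointness built into \eqref{eq:seas} all such sets of generation $\ge k$ sit inside their parent rectangles, which are disjoint from the corridors just described, and generations $<k$ are far away.

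For part (ii) the argument is topological: each $\gamma_j$ joins $\partial D_j$ to $\partial D_{j+1}$, so the set $\bigcup_{j=1}^\ell(|\gamma_j|\cup D_j)$ is a closed connected ``necklace'' — a loop of disks joined by arcs — and by construction it encircles $R^w$ (the arcs alternately run up the right side, across the bottom/end, down the left side, across the top/end). Since $z\in R^w$ and the necklace is contained in $\overline\Omega\setminus\{z\}$ lying in the annular region between $z$ and $\partial R^w$ ... actually it is cleaner to note the necklace is a connected compact set meeting a small circle around $z$ in winding-number-one fashion and also meeting a large circle around $z$: more precisely, each arc together with the two adjacent disks can be homotoped (rel nothing, just as a continuum-separation statement) so that the union forms a Jordan-type curve separating $R^w$'s interior from $\infty$; invoking the cyclic ordering \eqref{eq:order} and Lemma \ref{lem:basic} (which puts $z\in 4D$ for each neighbor, so $z$ is ``inside'' the necklace) one gets that $z$ and $\infty$ lie in different components of the complement. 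I would make this rigorous by exhibiting an explicit small circle $S(z,\rho)$ that every such necklace must cross an odd number of times, or equivalently by checking the necklace together with $R^w$ bounds a region containing $z$; the even integer $N_k$ and the fact that $N_k/2$ of the subrectangles lie ``below'' and $N_k/2$ ``above'' guarantee the neighbor list genuinely wraps around, including in the $\Do$/$\Up$ boundary cases, where the shortened lists \eqref{eq:kiitos} still close up because the ends of $R^w$ have no obstructions.

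\textbf{Main obstacle.} The routine-but-delicate part is verifying that the corridors are genuinely contained in $\Omega$, i.e. that the explicitly drawn path families avoid \emph{all} of $E\times F$ and \emph{all} the disks $D^{w'}(\cdot)$ — this is where one must actually use the nested estimates \eqref{eq:seas}, \eqref{eq:seasuusi}, \eqref{eq:kaksuusi} to control the positions of descendant rectangles and disks relative to the $k$:th level geometry. The conceptual obstacle is pinning down the separation statement (ii) cleanly; the cleanest route is probably to argue that $R^w$ together with the necklace forms a continuum whose complement has a bounded component containing $z$ (using that the disks of $\mathfrak N_k(z)$ lie outside $R^w$ on both sides and the connecting arcs stay in the bounded region near $R^w$), so that $\infty$ is separated from $z$.
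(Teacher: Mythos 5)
Your proposal follows essentially the same route as the paper: identify, for each consecutive pair $D_j < D_{j+1}$ in the cyclic order \eqref{eq:order}, an explicit corridor in $\Omega$ (a thin rectangular slit between nearly-tangent same-side disks, a horizontal slit through the removed strip of height $\delta_k$ for the right-to-left crossing, and a circular annular region around the short end of $R^w$ in the top/bottom cases) and apply \eqref{eq:mono} with \eqref{eq:rectmodu} or \eqref{eq:annmodu} for (i), then observe for (ii) that the resulting ``necklace'' of disks and arcs encircles $R^w \ni z$. The paper makes the same choices, using the inequalities $t_k,\delta_k \le r_k/100$ from \eqref{eq:seasuusi} for the vertical families, $4t_k \le \delta_k/25$ and the strip $T^{\tilde n+1}$ for the horizontal families, and $100 r_k \le \delta_{k-1}$ from \eqref{eq:kaksuusi} to justify the circular arcs at a bottom/top; your sketch has a few coordinate slips (e.g.\ writing $t_{k-1}$ where $t_k$ is meant, and overstating the corridor width between same-side disks, which is really of order $\delta_k$, not $r_k$), but these do not affect the viability of the method.
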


%%%%%%%%%%%%%%%%%%%%%%%%%%%%%%%%%%%%%%%%%%%%%%%%%%%%%%%%%%%%%%%%%%%%%%%%%%%%%%%%%%%%%%%%%%%%%%%%%%%
%%%%%%%%%%%%%%%%%%%%%%%%%%%%%%%%%%%%%%%%%%%%%%%%%%%%%%%%%%%%%%%%%%%%%%%%%%%%%%%%%%%%%%%%%%%%%%%%%%%

\begin{figure}
    \centering
    
\begin{tikzpicture}
\draw (-3.5,.05-.04) -- (-2.5,.05-.04);
\draw (3.5,.05-.04) -- (2.5,.05-.04);
\draw (-3.5,-.05+.04) -- (-2.5,-.05+.04);
\draw (3.5,-.05+.04) -- (2.5,-.05+.04);

\draw (-2.3,.05-.04) -- (-1.3,.05-.04);
\draw (-1.1,.05-.04) -- (-.1,.05-.04);
\draw (.1,.05-.04) -- (1.1,.05-.04);
\draw (1.3,.05-.04) -- (2.3,.05-.04);

\draw (-2.3,-.05+.04) -- (-1.3,-.05+.04);
\draw (-1.1,-.05+.04) -- (-.1,-.05+.04);
\draw (.1,-.05+.04) -- (1.1,-.05+.04);
\draw (1.3,-.05+.04) -- (2.3,-.05+.04);

\draw (0,-1.2+.04)  circle (1.1); 
\draw (0,1.2-.04)  circle (1.1); 
\draw (2.4,-1.2+.04)  circle (1.1); 
\draw (2.4,1.2-.04)  circle (1.1); 
\draw (-2.4,-1.2+.04)  circle (1.1); 
\draw (-2.4,1.2-.04)  circle (1.1); 

\draw[thick,dashed] (2.4,-1.2+.04) -- (2.4,1.2-.04);
\draw[thick,dashed] (2.4,1.2-.04) -- (-2.4,1.2-.04);

\draw[thick,dashed] (-2.4,-1.2+.04) -- (-2.4,1.2-.04);
\draw[thick,dashed] (2.4,-1.2+.04) -- (-2.4,-1.2+.04);

\end{tikzpicture}

    \caption{The neighboring disks when $w$ is not a bottom or a top. The figure is rotated by $90$ degrees, and the surrounding ``chain'' consisting of the neighboring disks and segments 
    $\gamma_j \in \Gamma_j$ is illustrated by a dashed rectangle. }
    \label{fig:kuva3}
\end{figure}
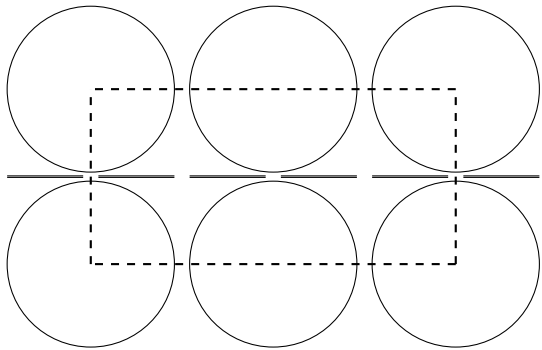

%%%%%%%%%%%%%%%%%%%%%%%%%%%%%%%%%%%%%%%%%%%%%%%%%%%%%%%%%%%%%%%%%%%%%%%%%%%%%%%%%%%%%%%%%%%%%%%%%%%
%%%%%%%%%%%%%%%%%%%%%%%%%%%%%%%%%%%%%%%%%%%%%%%%%%%%%%%%%%%%%%%%%%%%%%%%%%%%%%%%%%%%%%%%%%%%%%%%%%%

\begin{proof} 
Fix $z \in R^w$, where $w=(\tilde{m},\tilde{n}) \in \mathcal{W}_k$. Suppose first that $w$ is not a top or a bottom, and recall the cyclic order \eqref{eq:order} of elements in $\mathfrak{N}_k(z)$ 
(defined in \eqref{eq:tiitos}). See Figure \ref{fig:kuva3} for an illustration of the path families defined below. By \eqref{eq:seasuusi}, the distance $\delta_k$ between $D^{w-1}(\Ri)$ and $D^{w}(\Ri)$ is less than $\frac{r_k}{100}$, where $r_k$ is their radius. Recalling the coordinates of the 
centers of the disks from \eqref{eq:centers}, we thus conclude that every vertical segment connecting the horizontal sides of square 
$$
\Big(x^{\tilde{m}}+2t_k+\frac{9r_k}{10},x^{\tilde{m}}+2t_k+\frac{11r_k}{10}\Big)\times \Big(y^{\tilde{n}}-\frac{r_k}{10},y^{\tilde{n}}+\frac{r_k}{10} \Big) 
$$
contains a subsegment joining $D^{w-1}(\Ri)$ and $D^{w}(\Ri)$ in $\Omega \setminus \{\infty\}$. For the family $\Gamma_1$ of such subsegments,  
\eqref{eq:mono} and \eqref{eq:rectmodu} yield  
\begin{equation} \label{eq:ekaarvio} 
\modu(\Gamma_1) \geq 1 \geq \frac{1}{10}. 
\end{equation} 
We define families $\Gamma_2$, $\Gamma_4$ and $\Gamma_5$ of vertical segments in a similar manner, and apply the argument above to show that \eqref{eq:ekaarvio} 
also holds for such families. 

We next define family $\Gamma_3$ of horizontal segments connecting $D^{w+1}(\Ri)$ and $D^{w+1}(\Le)$. We recall from \eqref{eq:centers} that the points 
minimizing the distance of these disks are 
$$
x^{\tilde{m}} \pm 2t_k +i(y^{\tilde{n}}+2r_k+\delta_k)=: x^{\tilde{m}} \pm 2t_k +iy^+. 
$$
In particular, by \eqref{eq:seasuusi} we have that 
$$ 
\operatorname{dist}(D^{w+1}(\Ri), D^{w+1}(\Le))=4t_k \leq \frac{\delta_k}{25}. 
$$
Recalling that the set 
$$
T^{\tilde{n}+1}:=\Big\{x+iy: \, y^+ - \frac{\delta_k}{2}<y<y^+ + \frac{\delta_k}{2}\Big\} 
$$ 
does not intersect $E \times F$, we conclude that every horizontal segment connecting the vertical sides of square 
$$
(x^{\tilde{m}}-10t_k,x^{\tilde{m}}+10t_k)\times (y^+ - 10t_k,y^+ +10t_k) \subset T^{\tilde{n}+1}
$$ 
contains a subsegment connecting $D^{w+1}(\Ri)$ and $D^{w+1}(\Le)$ in $\Omega \setminus \{\infty\}$. Applying \eqref{eq:mono} and \eqref{eq:rectmodu}, we see that 
\eqref{eq:ekaarvio} holds for the family $\Gamma_3$ of all such subsegments. The same argument applies to $\Gamma_6$. We conclude that the desired modulus bounds (i) hold. 
Condition (ii) follows directly from the definitions of families $\Gamma_j$. We have established the proposition when $w$ is not a top or a bottom. 

We now assume that $w$ is a bottom, and recall the cyclic order \eqref{eq:order} of elements in $\mathfrak{N}_k(z)$ (defined in \eqref{eq:kiitos}). 
We can define families $\Gamma_1,\Gamma_2$ of vertical segments and $\Gamma_3$ of horizontal segments as above so that \eqref{eq:ekaarvio} holds. See Figure 
\ref{fig:kuva4} for an illustration. 

We define the final family $\Gamma_4$ of circular arcs as follows: Since $100 t_k \leq r_k$ by \eqref{eq:seasuusi}, for every circle $S_r$, with radius 
$\frac{3r_k}{4}<r < \frac{3r_k}{2}$ centered at $x^{\tilde{m}}+i(y^{\tilde{n}}-r_k)$, there is a connected component of $S_r \setminus (D^w(\Le) \cup D^w(\Ri))$ 
whose closure $\eta(r)$ contains the lower semicircle of $S_r$ and connects $D^w(\Le)$ and $D^w(\Ri)$. Moreover, since $w$ is a bottom and 
$100r_k \leq \delta_{k-1}$ by \eqref{eq:kaksuusi}, $\eta(r)$ does not intersect any other complementary components of $\Omega$. 

We conclude that each $\eta(r)$ connects $D^w(\Le)$ and $D^w(\Ri)$ in $\Omega \setminus \{\infty\}$. Applying \eqref{eq:mono} and \eqref{eq:annmodu} shows that the family 
$\Gamma_4$ of such arcs satisfies 
$$
\modu(\Gamma_4) \geq \frac{\log 2}{2\pi} \geq \frac{1}{10}. 
$$
We have proved the desired modulus bounds (i) for bottoms. Tops are treated similarly. Condition (ii) follows again from the definitions of families $\Gamma_j$. 
The proof is complete. 
\end{proof}
%%%%%%%%%%%%%%%%%%%%%%%%%%%%%%%%%%%%%%%%%%%%%%%%%%%%%%%%%%%%%%%%%%%%%%%%%%%%%%%%%%%%%%%%%%%%%%%%%%%
%%%%%%%%%%%%%%%%%%%%%%%%%%%%%%%%%%%%%%%%%%%%%%%%%%%%%%%%%%%%%%%%%%%%%%%%%%%%%%%%%%%%%%%%%%%%%%%%%%%

\begin{figure}
    \centering
    
\begin{tikzpicture}
\draw (-6.5,.05-.04) -- (-5.5,.05-.04);
\draw (3.5,.05-.04) -- (2.5,.05-.04);
\draw (-6.5,-.05+.04) -- (-5.5,-.05+.04);
\draw (3.5,-.05+.04) -- (2.5,-.05+.04);

\draw (-5.3,.05-.04) -- (-4.3,.05-.04);
\draw (-1.1,.05-.04) -- (-.1,.05-.04);
\draw (.1,.05-.04) -- (1.1,.05-.04);
\draw (1.3,.05-.04) -- (2.3,.05-.04);

\draw (-5.3,-.05+.04) -- (-4.3,-.05+.04);
\draw (-1.1,-.05+.04) -- (-.1,-.05+.04);
\draw (.1,-.05+.04) -- (1.1,-.05+.04);
\draw (1.3,-.05+.04) -- (2.3,-.05+.04);

\draw (0,-1.2+.04)  circle (1.1); 
\draw (0,1.2-.04)  circle (1.1); 
\draw (2.4,-1.2+.04)  circle (1.1); 
\draw (2.4,1.2-.04)  circle (1.1); 
\draw (-5.4,-1.2+.04)  circle (1.1); 
\draw (-5.4,1.2-.04)  circle (1.1);

\draw[thick,dashed] (2.4,-1.2+.04) -- (2.4,1.2-.04);
\draw[thick,dashed] (2.4,1.2-.04) -- (-1.2,1.2-.04);
\draw[thick,dashed] (2.4,-1.2+.04) -- (-1.2,-1.2+.04);

\draw[thick,dashed] (-1.2,1.2-.04) arc [start angle=90, end angle=270, radius=1.2-.04];

\fill (-6.7,1.2-.04) circle (1pt);
\fill (-6.9,1.2-.04) circle (1pt);
\fill (-7.1,1.2-.04) circle (1pt);

\fill (-6.7,-1.2+.04) circle (1pt);
\fill (-6.9,-1.2+.04) circle (1pt);
\fill (-7.1,-1.2+.04) circle (1pt);

\fill (3.7,1.2-.04) circle (1pt);
\fill (3.9,1.2-.04) circle (1pt);
\fill (4.1,1.2-.04) circle (1pt);

\fill (3.7,-1.2+.04) circle (1pt);
\fill (3.9,-1.2+.04) circle (1pt);
\fill (4.1,-1.2+.04) circle (1pt);

\end{tikzpicture}

    \caption{The neighboring disks when $w$ is a bottom. The figure is rotated by $90$ degrees, and the surrounding ``chain'' consisting of the neighboring disks and paths 
    $\gamma_j \in \Gamma_j$ is illustrated by a dashed loop. }
    \label{fig:kuva4}
\end{figure}
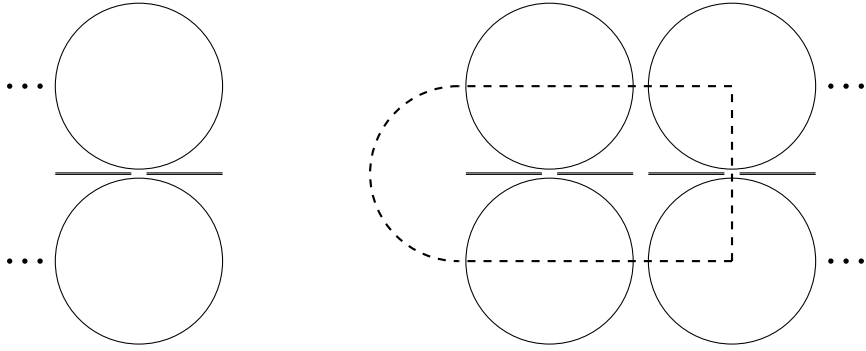

%%%%%%%%%%%%%%%%%%%%%%%%%%%%%%%%%%%%%%%%%%%%%%%%%%%%%%%%%%%%%%%%%%%%%%%%%%%%%%%%%%%%%%%%%%%%%%%%%%%
%%%%%%%%%%%%%%%%%%%%%%%%%%%%%%%%%%%%%%%%%%%%%%%%%%%%%%%%%%%%%%%%%%%%%%%%%%%%%%%%%%%%%%%%%%%%%%%%%%%

%%%%%%%%%%%%%%%%%%%%%%%%%%%%%%%%%%%%%%%%%%%%%%%%%%%%%%%%%%%%%%%%%%%%%%%%%%%%%%%%%%%%%%%%%%%%%%%%%%%
%%%%%%%%%%%%%%%%%%%%%%%%%%%%%%%%%%%%%%%%%%%%%%%%%%%%%%%%%%%%%%%%%%%%%%%%%%%%%%%%%%%%%%%%%%%%%%%%%%%

\subsection{Distortion estimate} \label{sec:distortion}
Given a domain $G \subset \hatc$, we denote by $\mathcal{C}(G)$ the collection of connected components of $\hatc \setminus G$ and by $\hat{G}$ the quotient space $\hatc / \sim$, 
where 
$$
x \sim y \text{ if either } x=y \in G \text{ or } x,y \in p \text{ for some } p \in \mathcal{C}(G). 
$$
The corresponding quotient map is $\pi_G:\hatc \to \hat{G}$. Identifying each $x \in G$ and $p \in \mathcal{C}(G)$ with $\pi_G(x)$ and $\pi_G(p)$, respectively, we have $\hat{G}=G \cup \mathcal{C}(G)$.  
A homeomorphism $f:G \to G'$ has a homeomorphic extension $\hat{f}:\hat{G} \to \hat{G'}$; see e.g. \cite{NtaYou20}*{Section 3} for a detailed discussion. 
%By Moore's theorem \cite{Moore:theorem}, the quotient $\hat G$ is homeomorphic to $\hatc$.

Let $f:\Omega \to \Omega'$ be a conformal map onto a circle domain $\Omega'$. We eventually want to conclude that $f$ is a M\"obius transformation. Post-composing $f$ with another 
M\"obius transformation does not affect the conclusion, so we may assume that $f(\infty)=\infty$. Recall the notation $\tau D:=\mathbb{D}(z_0,\tau r)$ for a disk $D=\mathbb{D}(z_0,r)$ and $\tau>0$. 
We denote the radius of a disk $D$ by $r(D)$. 

We continue to apply the notation of Section \ref{sec:neighbors} for the collection $\mathfrak{N}_k(z)$ of $k$:th level neighbors.  

\begin{proposition} \label{prop:fclose}
For every $z \in E \times F$ and $k \in \mathbb{N}$ there is 
\begin{equation} \label{eq:inclusion}
D^k_z \in \mathfrak{N}_k(z) \quad \text{so that} \quad z \in 4D^k_z \quad \text{and} \quad \hat{f}(\{z\}) \subset 10^{30} \hat{f}(D^k_z).  
\end{equation}  
\end{proposition}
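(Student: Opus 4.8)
The plan is to surround $\hat f(\{z\})$ by a ``small'' closed chain in $\hatc$, built from the images of the $k$-th level neighbors of $z$ and from suitably chosen connecting paths, and then to let $D^k_z$ be the neighbor whose image disk is the largest. Write $\mathfrak N_k(z)=\{D_1,\dots,D_\ell\}$, $\ell\in\{4,6\}$, in the cyclic order \eqref{eq:order}, and let $\Gamma_1,\dots,\Gamma_\ell$ be the families connecting $D_j$ to $D_{j+1}$ in $\Omega\setminus\{\infty\}$ furnished by Proposition~\ref{prop:surrounding}. From its proof one in fact has $\modu(\Gamma_j)\ge 1$ whenever $\Gamma_j$ is a family of straight segments, and $\modu(\Gamma_j)\ge\tfrac{\log 2}{2\pi}$ for the at most one family $\Gamma_j$ of circular arcs; keeping track of these finer bounds, rather than of the uniform bound $\tfrac1{10}$, is what will eventually make the constant come out below $10^{30}$. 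Since $f$ is conformal with $f(\infty)=\infty\in\Omega'$, \eqref{eq:invariance} gives $\modu(f\Gamma_j)=\modu(\Gamma_j)$, where $f\Gamma_j$ is a family of curves in $\Omega'$ connecting the complementary components $\hat f(D_j),\hat f(D_{j+1})\in\mathcal C(\Omega')$.

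The first substantive step is to localize the image disks. Fix $j$, write $\hat f(D_j)=\overline{\mathbb D}(c_j,\rho_j)$ (a point if $\rho_j=0$) and $d_j=|c_j-c_{j+1}|$; the two disks are distinct components, hence disjoint, so $d_j>\rho_j+\rho_{j+1}$. Every curve in $f\Gamma_j$ starts on $\hat f(D_j)$, runs in $\Omega'$, and ends on $\hat f(D_{j+1})$, so it has a subpath joining the two circles of the round annulus $\{\rho_j<|w-c_j|<d_j-\rho_{j+1}\}$. Thus \eqref{eq:mono} and \eqref{eq:ann2modu} yield $\modu(f\Gamma_j)\le\tfrac{2\pi}{\log((d_j-\rho_{j+1})/\rho_j)}$, which is vacuous unless $\rho_j>0$; so each $\hat f(D_j)$ is a genuine disk, and for a segment edge the estimate gives $d_j\le e^{2\pi}(\rho_j+\rho_{j+1})$. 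Since the cyclic chain has at most one arc edge, any two of its vertices are joined by at most $\ell-1\le 5$ segment edges, so iterating the triangle inequality gives $|c_j-c^\ast|\le 10\,e^{2\pi}\rho^\ast$ for all $j$, where $\rho^\ast:=\max_j\rho_j=\rho_{j^\ast}$ and $c^\ast:=c_{j^\ast}$; hence all $\hat f(D_j)$ lie in $\mathbb B:=\overline{\mathbb D}(c^\ast,C_2\rho^\ast)$ with $C_2:=10e^{2\pi}+1$.

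Next I would choose the connecting paths so that the resulting chain stays near $c^\ast$. Fix $j$. The subfamily $\Gamma_j'\subset\Gamma_j$ of those $\gamma$ with $|f\gamma|$ exiting $\overline{\mathbb D}(c^\ast,R\rho^\ast)$ has the property that every such $f\gamma$ starts in $\mathbb B$ and leaves $\overline{\mathbb D}(c^\ast,R\rho^\ast)$, hence contains a subpath joining the two circles of $\{C_2\rho^\ast<|w-c^\ast|<R\rho^\ast\}$; by \eqref{eq:invariance}, \eqref{eq:mono} and \eqref{eq:ann2modu}, $\modu(\Gamma_j')\le\tfrac{2\pi}{\log(R/C_2)}$. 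Choosing $R$ so that this is strictly below $\modu(\Gamma_j)$ — it suffices to take $R=C_2e^{2\pi+1}$ for a segment edge and $R=C_2e^{1+4\pi^2/\log 2}$ for the arc edge — forces $\Gamma_j\setminus\Gamma_j'\ne\emptyset$, so there is $\gamma_j\in\Gamma_j$ with $|f\gamma_j|\subset\overline{\mathbb D}(c^\ast,R\rho^\ast)$. Letting $C_3$ be the largest such $R$, a direct estimate of the constants gives $C_3<10^{30}$, so the chain $\mathcal C:=\bigcup_{j=1}^\ell\big(|f\gamma_j|\cup\hat f(D_j)\big)$ lies in $\overline{\mathbb D}(c^\ast,10^{30}\rho^\ast)=10^{30}\hat f(D_{j^\ast})$.

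Finally I would transfer the topological separation. By Proposition~\ref{prop:surrounding}(ii), $\bigcup_j(|\gamma_j|\cup D_j)$ separates $z$ from $\infty$ in $\hatc$; pushing this through the monotone quotient map $\pi_\Omega$, the homeomorphism $\hat f$, and the inverse image under $\pi_{\Omega'}$ (all of which preserve separation of points having connected fibers; see the quotient-space discussion above and \cite{NtaYou20}*{Section 3}) shows that $\mathcal C$ separates the connected set $\hat f(\{z\})$ from $\infty$ in $\hatc$. Since $\mathcal C\subset\overline{\mathbb D}(c^\ast,10^{30}\rho^\ast)$, the complement $\hatc\setminus\overline{\mathbb D}(c^\ast,10^{30}\rho^\ast)$ is connected, contains $\infty$, and is disjoint from $\mathcal C$; as $\hat f(\{z\})$ is connected and disjoint from $\mathcal C$, it cannot meet this set, so $\hat f(\{z\})\subset 10^{30}\hat f(D_{j^\ast})$. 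Taking $D^k_z:=D_{j^\ast}\in\mathfrak N_k(z)$ then gives \eqref{eq:inclusion}, since $z\in 4D^k_z$ by Lemma~\ref{lem:basic}. The step I expect to be the main obstacle is the constant bookkeeping: because $10^{30}$ is not much larger than $e^{20\pi}$, one cannot afford the weak uniform modulus bound in Proposition~\ref{prop:surrounding}(i) and must instead track the individual moduli and route the localization argument around the single weak arc edge; making the separation transfer rigorous through the quotient spaces $\hat\Omega,\hat{\Omega'}$ also requires some care.
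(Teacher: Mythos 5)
Your argument is correct, but the key quantitative step differs from the paper's. The paper gets by with only the uniform bound $\modu(\Gamma_j)\ge\frac{1}{10}$: since every curve in $f\Gamma_j$ meets $\hat f(D_j)$, if all of them had diameter $>2(e^{20\pi}+1)\,r(\hat f(D_j))$ the family would join $\hat f(D_j)$ to $\partial\big((e^{20\pi}+1)\hat f(D_j)\big)$, and \eqref{eq:mono}, \eqref{eq:ann2modu} would force $\modu(f\Gamma_j)\le 2\pi/\log(e^{20\pi}+1)<\frac{1}{10}$; so one may pick, per edge, an image path of diameter $\le 2(e^{20\pi}+1)r(\hat f(D_j))$, and summing the $\le 6$ contributions bounds the diameter of the whole chain by $12(e^{20\pi}+2)\max_j r(\hat f(D_j))\le 10^{29}\max_j r(\hat f(D_j))$, giving the second inclusion. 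You instead localize in two stages (first the centers $c_j$, then the connecting paths around $c^\ast$), which compounds the exponential bounds; this is exactly why, as you rightly anticipate, you cannot afford $\frac{1}{10}$ and must reopen the proof of Proposition~\ref{prop:surrounding} to extract the sharper per-edge constants $1$ and $\frac{\log 2}{2\pi}$ and route the cycle around the single weak arc edge. So the paper's single-pass diameter bound is a shortcut that makes the uniform statement of Proposition~\ref{prop:surrounding}(i) suffice, whereas your version, while correct and with sound constant bookkeeping, depends on the internals of that proposition's proof rather than only its statement. The separation transfer through the quotient spaces and the appeal to Lemma~\ref{lem:basic} for the first inclusion are the same in both arguments.
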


\begin{proof} Suppose $z \in R^w$, where $w=(\tilde{m},\tilde{n}) \in \mathcal{W}_k$. We recall that the first inclusion in 
\eqref{eq:inclusion} holds for all $D_j \in \mathfrak{N}_k(z)$ by Lemma \ref{lem:basic}. 

Given $j \in \{1,\ldots, \ell\}$, we denote $D'_j=\hat{f}(D_j)$ and $\Gamma'_j=f(\Gamma_j)$. Here $\Gamma_j$ is the path family in Proposition \ref{prop:surrounding}, which together 
with the conformal invariance of modulus (see \eqref{eq:invariance}) yields  
\begin{equation} \label{eq:falaraja}
\modu(\Gamma'_j) \geq \frac{1}{10} \quad \text{for every } j \in \{1,\ldots,\ell\}. 
\end{equation} 
We now claim that for every $j \in \{1,\ldots,\ell\}$ there is a $\gamma'_j \in \Gamma'_j$ for which 
\begin{equation}\label{eq:shortcurve}
\operatorname{diam}(|\gamma'_j|)  \leq 2\cdot(\exp(20 \pi)+1) r(D'_j) \leq 2\cdot(\exp(20 \pi)+1) r', 
\end{equation} 
where $r'=\max \Big\{r(D'_j): \, j \in \{1,\ldots,\ell\} \Big\}$. The second inequality is trivial. To prove the first inequality, recall that every $\gamma \in \Gamma_j'$ intersects 
$D_j'$ by the definition of $\Gamma_j'$. If the first inequality in \eqref{eq:shortcurve} fails then every $\gamma \in \Gamma_j'$ also intersects the boundary of $(\exp(20 \pi)+1)D'_j$. 
Combining \eqref{eq:falaraja} with the modulus identity \eqref{eq:ann2modu} and monotonicity property \eqref{eq:mono}, we see that 
$$
\frac{1}{10} \leq \modu(\Gamma_j') \leq \frac{2\pi}{\log(\exp(20\pi)+1)}, 
$$
which is a contradiction. We have proved \eqref{eq:shortcurve}. 

Let $\gamma'_j$, $j \in \{1,\ldots,\ell\}$, be the paths in \eqref{eq:shortcurve}. Part (ii) of Proposition \ref{prop:surrounding} shows that  
$$
T:=\cup_{j=1}^\ell (D'_j \cup |\gamma'_j|) \quad \text{separates } \hat{f}(\{z\}) \text{ from infinity}.   
$$ 
In particular, the distance between $D_j'$ and any point $z' \in \hat{f}(\{z\})$ is bounded from above by the diameter of $T$ (we will soon show that $z'$ is unique). 
Applying \eqref{eq:shortcurve} and triangle inequality, we conclude that if $D \in \mathfrak{N}_k(z)$ is one of the neighbors satisfying $r(D')=r'$, where $D'=\hat{f}(D)$, then  
\begin{eqnarray*} 
\dist(D',z') &\leq& \diam(T) \leq \sum_{j=1}^\ell (2r(D'_j)+\operatorname{diam}(|\gamma_j'|))  \\ 
&\leq& 12\cdot(2+\exp(20 \pi))r' \leq 10^{29}r';    
\end{eqnarray*} 
recall that the cardinality $\ell$ of $\mathfrak{N}_k(z)$ is at most $6$. We conclude that also the second inclusion in \eqref{eq:inclusion} holds if $D^k_z=D'$. The proof is complete. 
\end{proof} 

%%%%%%%%%%%%%%%%%%%%%%%%%%%%%%%%%%%%%%%%%%%%%%%%%%%%%%%%%%%%%%%%%%%%%%%%%%%%%%%%%%%%%%%%%%%%%%%%%%%
%%%%%%%%%%%%%%%%%%%%%%%%%%%%%%%%%%%%%%%%%%%%%%%%%%%%%%%%%%%%%%%%%%%%%%%%%%%%%%%%%%%%%%%%%%%%%%%%%%%

\subsection{Boundary extension of $f$} \label{sec:boundary}
We continue to investigate a conformal map $f:\Omega \to \Omega'$ onto a circle domain $\Omega'$ satisfying $f(\infty)=\infty$. We prove that $f$ extends to a homeomorphism between the closures of $\Omega$ and $\Omega'$.  

\begin{lemma}\label{lemma:extension}
The map $f$ extends to a homeomorphism $\tilde{f}:\overline{\Omega} \to \overline{\Omega'}$. 
\end{lemma}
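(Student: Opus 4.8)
\textbf{Proof plan for Lemma \ref{lemma:extension}.}

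The plan is to show that $f$ extends continuously to each boundary point, that the extension is injective, and that it is surjective onto $\overline{\Omega'}$; continuity of the inverse then follows by compactness. The key structural fact, already available from Section \ref{sec:distortion}, is that the complementary components of $\Omega$ are the disks $D^w(\cdot)$ together with the Cantor product $E\times F$, and that the quotient map formalism of Section \ref{sec:distortion} gives a homeomorphic extension $\hat f:\hat\Omega\to\hat{\Omega'}$ to the ``collapsed'' spheres. So the only real work is to promote $\hat f$ to an actual homeomorphism of closures, i.e. to verify that no nontrivial collapsing happens: each boundary circle $\partial D^w(\cdot)$ maps to a genuine circle $\partial D'$ of $\Omega'$, and each point of $E\times F$ maps to a single point.

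First I would handle the circles. Since $\Omega'$ is a circle domain and $f(\infty)=\infty$, every bounded complementary component of $\Omega'$ is a closed disk or a point; by the Koebe-type reflection principle, a conformal map across an arc of a circle bounding a component of $\Omega$ that corresponds (under $\hat f$) to a disk of $\Omega'$ extends conformally, so $f$ extends to a homeomorphism of $\partial D^w(\cdot)$ onto $\partial D'$, and in particular the image component is a genuine disk (not a point) of positive radius. The components of $\Omega'$ that are points are exactly the images under $\hat f$ of the point-components of $\hatc\setminus\Omega$ contained in $E\times F$; but $E\times F$ is totally disconnected, so each such point-component is a single point $z\in E\times F$. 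Thus the whole issue reduces to: \emph{for each $z\in E\times F$, $\lim_{\Omega\ni\zeta\to z}f(\zeta)$ exists as a single point $z'\in\hatc$.}

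This is where Proposition \ref{prop:fclose} does the work, and it is the step I expect to be the main obstacle to make fully rigorous. For each $k$ the proposition produces a neighbor disk $D^k_z\in\mathfrak N_k(z)$ with $z\in 4D^k_z$ and $\hat f(\{z\})\subset 10^{30}\hat f(D^k_z)$. As $k\to\infty$ the disks $D^k_z$ shrink to $z$ (their radii $r_k\to 0$ since $r_{k+1}\le \delta_{k-1}/100\to 0$ by \eqref{eq:kaksuusi}), so the corresponding image disks $D'=\hat f(D^k_z)$, being complementary components of the fixed circle domain $\Omega'$, also shrink: their radii $r(D')\to 0$ because a circle domain has at most finitely many complementary components of radius exceeding any fixed $\varepsilon$. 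Since $\hat f(\{z\})$ lies in $10^{30}D'$ for every $k$ and $\operatorname{diam}(10^{30}D')\to 0$, the set $\hat f(\{z\})$ is a single point $z'$; moreover for $\zeta\in\Omega$ near $z$, the surrounding ``chain'' $T$ of Proposition \ref{prop:fclose}, together with part (ii) of Proposition \ref{prop:surrounding}, confines $f(\zeta)$ to a small neighborhood of $z'$, giving genuine continuity $f(\zeta)\to z'$ as $\zeta\to z$ in $\Omega$. The map $z\mapsto z'$ is injective on $E\times F$: distinct $z_1,z_2\in E\times F$ are separated for large $k$ by disjoint surrounding chains at level $k$ (the chains are built from disjoint collections of neighbor disks once $k$ is large enough that $z_1,z_2$ lie in different rectangles $R^w$), whose $f$-images are disjoint Jordan curves separating $z_1'$ from $z_2'$. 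Injectivity across different circles, and between circles and $E\times F$ points, follows from the injectivity of $\hat f$. Finally surjectivity: $\tilde f(\overline\Omega)$ is compact, contains $\Omega'$, and contains every boundary circle and every collapsed point of $\Omega'$, hence equals $\overline{\Omega'}$; and a continuous bijection between compact Hausdorff spaces is a homeomorphism. The one subtlety to watch is that $10^{30}\hat f(D^k_z)$ must be interpreted on the sphere with the spherical metric near $\infty$ — but since $f(\infty)=\infty$ and we may assume $z,z'$ bounded, Euclidean estimates suffice for the relevant $k$.
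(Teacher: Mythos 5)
Your proposal is correct and takes essentially the same route as the paper: both reduce the lemma to showing that $\hat f(\{z\})$ is a single point for each $z\in E\times F$, and both deduce this from Proposition \ref{prop:fclose} together with the observation that the distinct image components $\hat f(D^k_z)$ of the circle domain $\Omega'$ lie in a bounded region of $\mathbb{C}$ (since $\infty\in\Omega'$), forcing $r(\hat f(D^k_z))\to 0$. The continuity, injectivity and surjectivity details you spell out are in the paper handled implicitly via the quotient-space formalism $\hat f:\hat\Omega\to\hat\Omega'$ from Section \ref{sec:distortion}, and the circle case is treated there via Carath\'eodory's theorem rather than the reflection principle, but these are the same standard fact.
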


\begin{proof}
Since the disk components $D$ of $\hatc \setminus \Omega$ are isolated in $\pi_\Omega(\mathcal{C}(\Omega))$, Ca\-ra\-th\'eo\-do\-ry's theorem shows that $f$ has a homeomorphic extension $\partial D \to \partial \hat{f}(D)$. Hence it suffices to show that $\operatorname{diam}(\hat{f}(\{z\}))=0$ for every $z \in E \times F$. 

Notice that the preimage of $\hat{f}(\mathcal{C}(\Omega))$ under $\pi_{\Omega'}^{-1}$ is bounded in $\mathbb{C}$ since $\infty \in \Omega'$. Consequently, the sequence of disks $D_z^k$ in 
Proposition \ref{prop:fclose} satisfies $r(\hat{f}(D_z^k)) \to 0$ as $z \to \infty$. Combining with \eqref{eq:inclusion}, we have 
$$
\operatorname{diam}(\hat{f}(\{z\})) \leq 2\cdot 10^{30} \lim_{k \to \infty} r(\hat{f}(D_z^k)) =0.  
$$ 
The proof is complete. 
\end{proof} 

We now apply the distortion bounds in Proposition \ref{prop:fclose} towards regularity of the extension $\tilde{f}$ on $E \times F$. 
Recall that the $1$-dimensional \emph{Hausdorff content} $\mathcal{H}_\infty^1(A)$ and \emph{Hausdorff measure} $\mathcal{H}^1(A)$ of $A \subset \mathbb{C}$ are defined by 
\begin{eqnarray*}
\mathcal{H}_\delta^1(A) &=& \inf \left\{ \sum_{j=1}^{\infty} \operatorname{diam}(V_j): \, A \subset \bigcup_j V_j, \, \operatorname{diam}(V_j) <\delta \right\}, \quad 0<\delta \leq \infty, \\
\mathcal{H}^1(A) &=& \lim_{\delta \to 0} \mathcal{H}_\delta^1(A)=\sup_{\delta > 0} \mathcal{H}_\delta^1(A). 
\end{eqnarray*}
We say that a property 
holds for \emph{almost every path in $\mathbb{C}$}, if the family of paths for which the property does not hold has zero modulus.

\begin{proposition} \label{prop:abscont} 
Let $\tilde{f}$ be the extension in Lemma \ref{lemma:extension}. Then almost every path $\gamma$ in $\mathbb{C}$ satisfies 
\begin{equation} \label{eq:abscont}
\mathcal{H}^1\Big(\tilde{f}\big(|\gamma|\cap (E \times F)\big)\Big)=0. 
\end{equation} 
\end{proposition}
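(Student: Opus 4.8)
The plan is to cover $\tilde f\big(|\gamma|\cap (E\times F)\big)$, at every scale $k$, by the images of the distinguished neighbor disks supplied by Proposition~\ref{prop:fclose}, and to convert the resulting estimate into a modulus bound. Keep the notation $f(\infty)=\infty$ and the extension $\tilde f$ from Lemma~\ref{lemma:extension}. For $k\in\mathbb{N}$ write $\mathcal{D}_k=\{D^w(\Le),D^w(\Ri):w\in\mathcal{W}_k\}$; every such disk has radius $r_k$. For each $w\in\mathcal{W}_k$ fix a disk $D^{[w]}\in\mathfrak{N}_k(z)\subset\mathcal{D}_k$ as in Proposition~\ref{prop:fclose} (with $z\in R^w$); since $\mathfrak{N}_k(z)$ depends only on $w$, this choice can be made once per $w$, and then $\tilde f(R^w\cap(E\times F))\subset 10^{30}\hat f(D^{[w]})$ while $R^w\subset 4D^{[w]}$ (the estimate in the proof of Lemma~\ref{lem:basic} applies verbatim to every point of $R^w$, not just $z$). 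Consequently, for every path $\gamma$ in $\mathbb{C}$ and every $k$,
\[
\tilde f\big(|\gamma|\cap(E\times F)\big)\subset\bigcup_{w\in W_k(\gamma)}10^{30}\hat f(D^{[w]}),\qquad W_k(\gamma):=\{w\in\mathcal{W}_k:R^w\cap|\gamma|\neq\emptyset\}.
\]

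Next I would set up the test functions. The disks $\hat f(D)$, $D\in\bigcup_k\mathcal{D}_k$, are pairwise disjoint components of $\hatc\setminus\Omega'$ contained in a fixed bounded subset of $\mathbb{C}$ (by Lemma~\ref{lemma:extension}, as $\infty\in\Omega'$), so $\sum_D r(\hat f(D))^2<\infty$. Since the collections $\mathcal{D}_k$ are pairwise disjoint (the radii $r_k$ are distinct by \eqref{eq:kaksuusi}) and each $D\in\mathcal{D}_k$ coincides with $D^{[w]}$ for at most three $w\in\mathcal{W}_k$, it follows that $\sigma_k:=\sum_{w\in\mathcal{W}_k}r(\hat f(D^{[w]}))^2\to 0$, and likewise $r(\hat f(D))\to 0$ uniformly over $D\in\mathcal{D}_k$ as $k\to\infty$. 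The parameter choices \eqref{eq:seasuusi}--\eqref{eq:kaksuusi} also give that the dilates $\{5D:D\in\mathcal{D}_k\}$ have overlap bounded by an absolute constant $M$ (only boundedly many level-$k$ cells lie within distance $O(r_k)$ of any point). Now set
\[
g_k=\sum_{w\in\mathcal{W}_k}\frac{r(\hat f(D^{[w]}))}{r_k}\,\chi_{5D^{[w]}};
\]
this is bounded with bounded support, hence $g_k\in L^2(\mathbb{C})$, and by the overlap bound $\|g_k\|_{L^2(\mathbb{C})}^2\le 25\pi M\sigma_k\to 0$.

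The heart of the argument is a crossing estimate. Let $\gamma$ be a locally rectifiable, non-constant path with $d:=\operatorname{diam}(|\gamma|)>0$, and let $k$ be large enough that $10r_k<d$. For $w\in W_k(\gamma)$ we have $R^w\subset 4D^{[w]}$ but $|\gamma|\not\subset 5D^{[w]}$, since $\operatorname{diam}(5D^{[w]})=10r_k<d$; as $\gamma$ meets $R^w$, it therefore contains a subpath lying in $5D^{[w]}$ and joining $\partial(4D^{[w]})$ to $\partial(5D^{[w]})$, of length at least $r_k$. Hence $\int_\gamma\chi_{5D^{[w]}}\,ds\ge r_k$ for every $w\in W_k(\gamma)$, so
\[
\int_\gamma g_k\,ds\ge\sum_{w\in W_k(\gamma)}r(\hat f(D^{[w]}))\ge\frac{1}{2\cdot 10^{30}}\,\mathcal{H}^1_\infty\Big(\tilde f\big(|\gamma|\cap(E\times F)\big)\Big),
\]
the last step combining the covering above with $\operatorname{diam}(10^{30}\hat f(D^{[w]}))=2\cdot 10^{30}r(\hat f(D^{[w]}))$ and the fact that $W_k(\gamma)$ is finite. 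To conclude, let $\Gamma_{\mathrm{exc}}$ be the family of locally rectifiable paths $\gamma$ with $\mathcal{H}^1(\tilde f(|\gamma|\cap(E\times F)))>0$; each is non-constant and satisfies $\mathcal{H}^1_\infty(\tilde f(|\gamma|\cap(E\times F)))>0$. Writing $\Gamma_{\mathrm{exc}}=\bigcup_{n\ge 1}\Gamma^n$ with $\Gamma^n=\{\gamma\in\Gamma_{\mathrm{exc}}:\mathcal{H}^1_\infty(\tilde f(|\gamma|\cap(E\times F)))>\tfrac1n\text{ and }10r_n<\operatorname{diam}(|\gamma|)\}$, the crossing estimate shows $2n\cdot 10^{30}g_k$ is admissible for $\Gamma^n$ for every $k\ge n$, whence $\modu(\Gamma^n)\le(2n\cdot 10^{30})^2\|g_k\|_{L^2(\mathbb{C})}^2\to 0$; thus $\modu(\Gamma^n)=0$ and $\modu(\Gamma_{\mathrm{exc}})=0$. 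Since paths that are not locally rectifiable do not affect the modulus of a family and constant paths satisfy \eqref{eq:abscont} trivially, almost every path in $\mathbb{C}$ satisfies \eqref{eq:abscont}.

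I expect the main obstacle to be making the crossing mechanism precise: one must upgrade the bare intersection $R^w\cap|\gamma|\neq\emptyset$ to a quantitative lower bound on the arclength of $\gamma$ inside $5D^{[w]}$, which is possible only because the scales $r_k$ shrink to $0$ while $\operatorname{diam}(|\gamma|)$ is a fixed positive number — this is exactly what the splitting of $\Gamma_{\mathrm{exc}}$ by a ``witness scale'' and a content threshold records, and it is why working with Hausdorff content $\mathcal{H}^1_\infty$ rather than $\mathcal{H}^1$ directly is convenient. The remaining care is in the two quantitative inputs from the construction: the absolute bound on the overlap of $\{5D:D\in\mathcal{D}_k\}$ and the summability $\sum_D r(\hat f(D))^2<\infty$, the latter resting on $\Omega'$ being a circle domain with $\infty\in\Omega'$ via Lemma~\ref{lemma:extension}. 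Alternatively, one could replace the decomposition of $\Gamma_{\mathrm{exc}}$ by an appeal to Fuglede's lemma applied to the sequence $(g_k)$, which converges to $0$ in $L^2(\mathbb{C})$.
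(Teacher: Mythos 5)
Your proof is correct, and it takes a genuinely different route from the paper's. The paper fixes a single test function $\rho=\sum_{D\in\mathcal{D}}\frac{r(\hat f(D))}{r(D)}\chi_D\in L^2(\mathbb{C})$, passes to its Hardy--Littlewood maximal function $\mathcal{M}(\rho)\in L^2(\mathbb{C})$, and, for each rectifiable $\gamma$ and large $k$, derives the bound $\mathcal{H}^1_\infty\big(\tilde f(|\gamma|\cap(E\times F))\big)\le 1800\tau\int_\gamma\chi_{N_{9r_k}(E\times F)}\mathcal{M}(\rho)\,d\mathcal{H}^1$; letting $k\to\infty$ by monotone convergence reduces the right side to $\int_\gamma\chi_{E\times F}\mathcal{M}(\rho)\,d\mathcal{H}^1$, which vanishes for almost every $\gamma$ because $E\times F$ has zero area (Lemma~\ref{lem:areazero}). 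Your argument dispenses with the maximal function and with Lemma~\ref{lem:areazero} entirely: you build a scale-dependent family $g_k$ supported on the dilated neighbor disks, show $\|g_k\|_{L^2}\to0$ by summing the tail $\sum_{D\in\mathcal{D}_k}r(\hat f(D))^2$ (which must tend to zero since the images are disjoint disks in a bounded region), extract a $k$-uniform crossing bound $\int_\gamma g_k\,ds\ge\frac1{2\cdot10^{30}}\mathcal{H}^1_\infty(\tilde f(|\gamma|\cap(E\times F)))$, and finish by decomposing the exceptional family by content threshold and witness scale (a Fuglede-style argument, as you note). The paper's route is shorter and leans on measure-zero plus $L^2$-boundedness of the maximal operator; your route is more elementary and makes the Fuglede mechanism explicit. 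I verified the two small lemmas you appeal to implicitly: the disk $D^{[w]}$ chosen in the paper's Proposition~\ref{prop:fclose} is the neighbor with maximal image radius, hence depends only on $w$ and works simultaneously for all $z\in R^w\cap(E\times F)$; and the Pythagorean estimate in Lemma~\ref{lem:basic} indeed gives $R^w\subset 4D$ for every $D\in\mathfrak{N}_k(z)$, not merely $z\in 4D$. The overlap bound for $\{5D:D\in\mathcal{D}_k\}$ and the finiteness of $\mathcal{W}_k$ also hold as you use them.
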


We will not apply Proposition \ref{prop:abscont} later in this paper, and it could be proved by applying \cite[Theorem 1.2]{Nta24a} to the extension $f_*$ in Proposition \ref{prop:Schottky} below. We give a proof of the proposition for the reader's convenience, since it is short and contains the basic idea on how distortion bounds are applied to prove the regularity properties  which eventually lead to the rigidity of 
$\Omega$. See Remark \ref{rem:viime} for further discussion. We denote 
$$ 
\mathcal{D}_k= \Big\{D=D^w(\Le) \text{ or }D^w(\Ri): \, w \in \mathcal{W}_k \Big\}, \quad \mathcal{D}=\bigcup_{k=1}^\infty \mathcal{D}_k. 
$$ 

\begin{proof}[Proof of Proposition \ref{prop:abscont}]
Recall that $\infty \in \Omega'$. We notice that if   
\begin{eqnarray*}  
\rho(z)=\left\{ \begin{array}{ll} 
\frac{r(\hat{f}(D))}{r(D)}, & z \in D  \text{ for some } D \in \mathcal{D}, \\ 
0, & \text{otherwise},  
\end{array}
\right. 
\end{eqnarray*} 
then $\rho \in L^{2}(\mathbb{C})$. 
We fix $k \in \mathbb{N}$ and a rectifiable path $\gamma$ in $\mathbb{C}$, and denote $|\gamma| \cap (E \times F)=:G$. Then Proposition \ref{prop:fclose} guarantees that if $\tau=10^{30}$ and  
$$
\mathcal{G}_k=\{D \in \mathcal{D}_k: 4D \cap G \neq \emptyset \}=\{D_1,\ldots,D_p\}, 
$$
then $\cup_{j=1}^p (\tau \hat{f}(D_j))$ covers $\tilde{f}(G)$. Thus, denoting $D'_j=\hat{f}(D_j)$ we have  
\begin{equation} \label{eq:possu}
\mathcal{H}^1_{\infty}(\tilde{f}(G)) \leq 2\tau \sum_{j=1}^p r(D'_j). 
\end{equation} 
Increasing $k$ if necessary, we may assume that $\operatorname{diam}(|\gamma|) \geq r_k$, where $r_k$ is the common radius of disks $D_j \in \mathcal{G}_k$. It then follows from the definition of $\mathcal{G}_k$ that we have 
$\mathcal{H}^1(|\gamma| \cap 5D_j) \geq r_k$, so that 
\begin{equation}  
\label{eq:tossu} r(D'_j) \leq 25 r_k \inf\{\mathcal{M}(\rho)(z): \, z \in |\gamma| \cap 5D_j\}  
\leq {25} \int_{|\gamma|} \chi_{5D_j}\mathcal{M}(\rho) \, d\mathcal{H}^1 
\end{equation}
for every $1 \leq j \leq p$, where $\mathcal{M}(\rho)$ is the non-centered maximal function of $\rho$ (see e.g. \cite[Ch. 2]{Hei01}).  
Combining \eqref{eq:possu} and \eqref{eq:tossu}, we conclude that 
\begin{equation}\label{eq:kossu}
\mathcal{H}^1_{\infty}(\tilde{f}(G)) \leq 50\tau \int_{|\gamma|} \mathcal{M}(\rho) \sum_{j=1}^p \chi_{5D_j} \, d\mathcal{H}^1 \leq 50\tau \int_{\gamma} \mathcal{M}(\rho) \sum_{j=1}^p \chi_{5D_j} \, ds. 
\end{equation}
Since disks $D_j$ are disjoint we have $\sum_{j=1}^p \chi_{5D_j}(z) \leq 36$ for every $z \in \mathbb{C}$. Thus \eqref{eq:kossu} yields 
\begin{equation} \label{eq:mi}
\mathcal{H}^1_{\infty}(\tilde{f}(G)) \leq  1800 \tau  \int_{\gamma} \chi_{N_{9r_k}(E \times F)}\mathcal{M}(\rho) \, ds, 
\end{equation} 
where $N_{s}(U)$ is the closed $s$-neighborhood of $U \subset \mathbb{C}$. 

Recall that $\mathcal{M}(\rho) \in L^2(\mathbb{C})$ by the $L^2$-boundedness of maximal functions. By the definition of conformal modulus it then follows that 
$\mathcal{M}(\rho)$ is integrable over almost every path $\gamma$ in $\mathbb{C}$. Thus, we can apply \eqref{eq:mi} and monotone convergence with $k \to \infty$ to show that 
\begin{equation} \label{eq:egypt}
\mathcal{H}^1_{\infty}(\tilde{f}(G)) \leq  1800 \tau  \int_{\gamma} \chi_{E \times F}\mathcal{M}(\rho) \, ds 
\end{equation} 
for almost every path $\gamma$ in $\mathbb{C}$. On the other hand, since $E\times F$ has Lebesgue measure zero by Lemma \ref{lem:areazero}, we have that 
$\int_{\mathbb{C}} \chi_{E \times F}\mathcal{M}(\rho) \, dA=0$. In particular, $\int_{\gamma} g\, ds =0$ for almost every $\gamma$ 
in $\mathbb{C}$, again by the definition of conformal modulus. Since $\mathcal{H}^1(\tilde{f}(G))=0$ precisely when $\mathcal{H}^1_{\infty}(\tilde{f}(G))=0$ (see e.g. \cite[Ch. 8.3]{Hei01}), our claim \eqref{eq:abscont} follows from \eqref{eq:egypt}. 
\end{proof}

%%%%%%%%%%%%%%%%%%%%%%%%%%%%%%%%%%%%%%%%%%%%%%%%%%%%%%%%%%%%%%%%%%%%%%%%%%%%%%%%%%%%%%%%%%%%%%%%%%%
%%%%%%%%%%%%%%%%%%%%%%%%%%%%%%%%%%%%%%%%%%%%%%%%%%%%%%%%%%%%%%%%%%%%%%%%%%%%%%%%%%%%%%%%%%%%%%%%%%%

\subsection{Global extension of $f$} \label{sec:Schottky} 
In this section we apply Lemma \ref{lemma:extension} and repeated reflections across the boundary circles of $\Omega$ and $\Omega'$ to extend $f$ to all of $\hatc$. The method has been applied to prove rigidity of circle domains and Schottky sets e.g. in \cite{HeSch94}, \cite{BKM09}, \cite{You16}, \cite{NtaYou20}, \cite{NtaRaj23}. We refer to \cite[Section 7.1]{NtaYou20} 
for the details of the following construction and the proof of Proposition \ref{prop:Schottky} below.  

As before, we assume that $f(\infty)=\infty$. Given $D \in \mathcal{D}$, let $R_D$ be the reflection across the circle $\partial D$. 
The \emph{Schottky group} $\mathcal{S}(\Omega)$ is the free discrete group generated by $\{R_D: \, D \in \mathcal{D}\}$. Every non-identity element $g$ of $\mathcal{S}(\Omega)$ 
can be uniquely written as 
\begin{equation} \label{eq:paki}
g=R_{D_1} \circ \cdots \circ R_{D_\ell}, \quad \text{where } D_{j+1} \neq D_j \text{ for all } 1 \leq j \leq \ell-1. 
\end{equation} 

Denoting $D'=\hat{f}(D)$, the map $f$ admits a conformal extension 
\begin{eqnarray*} 
f_*:\Omega \cup \bigcup_{D \in \mathcal{D}}(R_D(\Omega) \cup \partial D) \to \Omega' \cup \bigcup_{D \in \mathcal{D}}(R_{D'}(\Omega') \cup \partial D'): 
\end{eqnarray*} 
we set $f_*(z)=(R_{D'}\circ f \circ R_D)(z)$ for $z \in R_D(\Omega)$ and apply Lemma \ref{lemma:extension} and the Schwarz reflection principle to extend $f_*$  
across the boundary circle $\partial D$. Continuing inductively, we see that $f_*$ can be further extended to the union $\Omega_*$ of sets 
$g(\Omega) \cup g(\partial D_\ell)$, $g \in \mathcal{S}(\Omega)$. 
Here $D_\ell$ is the disk in \eqref{eq:paki}.  

Thus, we have a conformal homeomorphism $f_*:\Omega_* \to \Omega_*'$, where $\Omega_*'$ is defined as $\Omega_*$ but using elements $g'=\mathcal{S}(\Omega')$ instead 
of elements $g=\mathcal{S}(\Omega)$. The boundary satisfies $\partial \Omega_*=\hatc \setminus \Omega_*=X \cup Y$, where  
\begin{equation}
\label{eq:ahja1} 
X = \bigcup_{g \in \mathcal{S}(\Omega)} g(E \times F), 
\end{equation} 
and for every $z \in Y$ there are disks $D_j \in \mathcal{D}$ so that if $B_0=D_0$ then 
\begin{equation} 
\label{eq:ahja2} 
\{z\} = \cap_{j=0}^\infty B_j, \quad \text{where} \quad B_{j+1}=R_{D_1} \circ \cdots \circ R_{D_j}(D_{j+1}) \subset B_j.  
\end{equation} 

Given $z \in Y$, the intersection of the disks $B_j'$ bounded by circles $f_*(\partial B_j)$ is a point $z' \in \partial \Omega_*'$. Applying Lemma \ref{lemma:extension} to $X$ and setting $f_*(z)=z'$ for $z \in Y$ shows that $f_*$ has a homeomorphic extension to $\hatc$, see 
\cite[Lemma 7.5]{NtaYou20}. The following proposition summarizes our discussion.  

\begin{proposition}\label{prop:Schottky}
The map $f_*:\hatc \to \hatc$ is a homeomorphism, and conformal on $\Omega_*=\hatc \setminus (X \cup Y)$. Here $X$ and $Y$ satisfy \eqref{eq:ahja1} and \eqref{eq:ahja2}, 
respectively.   
\end{proposition}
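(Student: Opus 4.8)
The plan is to adopt the explicit reflection construction described just above as the \emph{definition} of $f_*$, and then verify in turn that it is conformal on $\Omega_*$, that it is well defined and continuous on $X\cup Y$, and that it is a bijection of $\hatc$; the homeomorphism statement is then automatic by compactness. The whole argument follows the pattern of \cite[Section 7.1]{NtaYou20}, with Lemma~\ref{lemma:extension} supplying the boundary-extension input at the first reflection step, and the conclusion is exactly \cite[Lemma~7.5]{NtaYou20}.

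First I would fix the combinatorics. Because the closed disks $D\in\mathcal{D}$ are pairwise disjoint, a Klein combination (ping-pong) argument shows that the tiles $g(\Omega)$, $g\in\mathcal{S}(\Omega)$, are pairwise disjoint, that $\Omega_*$ is open, and that the reduced word in \eqref{eq:paki} representing a given $g\neq\mathrm{id}$ is unique. Since the extended map $\hat{f}$ carries $\mathcal{C}(\Omega)$ bijectively onto $\mathcal{C}(\Omega')$, with disk components going to disk components (this is the content of the Carath\'eodory step in the proof of Lemma~\ref{lemma:extension}), the generators $R_D$ of $\mathcal{S}(\Omega)$ correspond to the generators $R_{D'}$ of $\mathcal{S}(\Omega')$, and the same statements hold for $\Omega'$. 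The conformal extension is then built by induction on word length: on $R_D(\Omega)$ set $f_*=R_{D'}\circ f\circ R_D$; by Lemma~\ref{lemma:extension} this agrees continuously on $\partial D$ with $f$, so the Schwarz reflection principle makes $f_*$ conformal across $\partial D$; repeating this across every circle $g(\partial D_\ell)$ and using uniqueness of reduced words to rule out inconsistencies yields a conformal homeomorphism $f_*:\Omega_*\to\Omega_*'$.

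Next I would extend $f_*$ over $\partial\Omega_*=X\cup Y$. For $z=g(\zeta)\in g(E\times F)\subset X$ with $\zeta\in E\times F$, the value $f_*(z)=g'(\tilde{f}(\zeta))$ (with $g'$ the element of $\mathcal{S}(\Omega')$ matching $g$ and $\tilde{f}$ as in Lemma~\ref{lemma:extension}) is forced by continuity along sequences in $g(\Omega)$. For $z\in Y$ with nested disks $B_j\downarrow\{z\}$ as in \eqref{eq:ahja2}, note that $\partial B_j$ is the $g$-image of the circle $\partial D_j$ for $g=R_{D_1}\circ\cdots\circ R_{D_{j-1}}$, and that $f_*(\partial D_j)=\partial\hat{f}(D_j)$ is a circle; since $g'$ is a composition of reflections, $f_*(\partial B_j)$ is again a circle bounding a disk $B_j'$, and the $B_j'$ are nested because $f_*$ is a homeomorphism of $\Omega_*$. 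One then sets $f_*(z)=\bigcap_j B_j'$. Finally $f_*$ is a bijection: injectivity follows from the disjointness–nesting structure on both sides, and surjectivity from running the same construction for the conformal map $f^{-1}:\Omega'\to\Omega$, which produces the inverse; being a continuous bijection of the compact Hausdorff space $\hatc$, $f_*$ is a homeomorphism.

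The one genuinely metric point — and the part I expect to be the main obstacle — is to show $\diam(B_j')\to 0$ for every $z\in Y$, and likewise $\diam(g'(E\times F))\to 0$ as the word length of $g$ tends to infinity; without this, $f_*$ need not be single valued on $Y$ nor continuous along $X\cup Y$. The estimate uses that inside each rectangle $R^w$ the generating disks are uniformly separated relative to their radius, so that a composition of non-backtracking reflections $R_{D_1}\circ\cdots\circ R_{D_j}$ contracts diameters by a definite factor once $j$ exceeds a bound depending only on the finitely many relevant scales — the circle-domain analogue of the classical shrinking lemma for Schottky groups; alternatively one may simply quote the corresponding estimates from \cite[Section~7.1]{NtaYou20}. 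Once the diameters are known to shrink, continuity of $f_*$ at points of $X\cup Y$ is immediate, and since $f_*$ is conformal on $\Omega_*$ by construction, the proposition follows.
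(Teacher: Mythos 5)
Your proposal follows the same route as the paper: define $f_*$ by iterated Schwarz reflections, using Lemma~\ref{lemma:extension} to supply the first boundary values, extend over $X$ via $\tilde f$ and over $Y$ via the nested disks $B_j'$, and invoke \cite[Section 7.1]{NtaYou20} (in particular \cite[Lemma 7.5]{NtaYou20}) for the details of the homeomorphic extension. The paper's own treatment of Proposition~\ref{prop:Schottky} is exactly this sketch plus the citation, so the two match in structure.

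The one substantive issue is the geometric justification you offer for the ``genuinely metric point,'' which is in fact false for this particular $\Omega$ and would mislead a reader into thinking the step is elementary. You claim that ``inside each rectangle $R^w$ the generating disks are uniformly separated relative to their radius,'' so that a bounded number of non-backtracking reflections contracts by a definite factor. But by \eqref{eq:centers} and \eqref{eq:seasuusi}, adjacent disks $D^{w}(\Ri)$ and $D^{w+1}(\Ri)$ in $\mathcal{D}_k$ have common radius $r_{k-1}$ and gap $\delta_{k-1}$, with $\delta_{k-1}/r_{k-1}\leq \exp(-(2N_{k-1})^{2(k-1)})$, while $\dist(D^w(\Le),D^w(\Ri))=4t_{k-1}$ is smaller still. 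The disks are therefore nearly tangent relative to their size, a single reflection $R_{D_1}(D_2)$ of one such disk across its near-tangent neighbor produces a disk of diameter comparable to $r_{k-1}$ (no definite contraction), and there are infinitely many distinct scales $r_k$, $\delta_k$, $t_k$, so ``finitely many relevant scales'' is also off. The naive Schottky shrinking lemma thus does not apply as stated; the shrinking of $\diam(B_j')$ and continuity of $f_*$ across $X\cup Y$ genuinely require the (area/modulus-based, separation-free) argument of \cite[Lemma 7.5]{NtaYou20}. Your fallback of simply quoting that lemma is what the paper does, so the proposal is salvageable, but the attempted elementary explanation should be dropped or corrected.
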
 

%%%%%%%%%%%%%%%%%%%%%%%%%%%%%%%%%%%%%%%%%%%%%%%%%%%%%%%%%%%%%%%%%%%%%%%%%%%%%%%%%%%%%%%%%%%%%%%%%%%
%%%%%%%%%%%%%%%%%%%%%%%%%%%%%%%%%%%%%%%%%%%%%%%%%%%%%%%%%%%%%%%%%%%%%%%%%%%%%%%%%%%%%%%%%%%%%%%%%%%

\subsection{Eccentric distortion and conformality}\label{sec:final}

The final step in the proof of the rigidity of $\Omega$ is showing that the homeomorphic extension $f_*:\hatc \to \hatc$ of $f$ (see Proposition \ref{prop:Schottky}) is conformal on $\hatc$, and therefore a M\"obius transformation. We now show how conformality follows from Proposition \ref{prop:fclose} and a recent characterization of conformal maps by Ntalampekos \cite{Nta24a}. 

Recall that a sense-preserving homemorphism $h:G \to G'$ between subdomains of $\hatc$ is \emph{$K$-quasiconformal}, $K \geq 1$, if the conformal modulus of every path family $\Gamma$ 
in $G \cap \mathbb{C}$ satisfies 
\begin{equation} \label{eq:pask}
\frac{1}{K} \modu(\Gamma) \leq \modu(h\Gamma) \leq K \modu(\Gamma). 
\end{equation} 
By \eqref{eq:invariance}, $1$-quasiconformality is equivalent with conformality. 

The classical \emph{metric definition of quasiconformality} (see e.g. \cite[Ch. 4]{Vai71}) is given in terms of the \emph{metric distortion}, which at a point $z_0$ measures the distortion of 
images under $h$ of small disks centered at $z_0$. We apply a more flexible notion of metric distortion which was recently introduced by Ntalampekos: We say that the \emph{eccentricity} of a bounded open set $A \subset \mathbb{C}$ is 
$$
E(A)=\inf\{M \geq 1: \, \text{there exists an open disk } B \text{ such that } B \subset A \subset MB \}. 
$$
The \emph{eccentric distortion} of a topological embedding $h:G \to \hatc$ of an open $G \subset \hatc$ at $z_0 \in G \setminus (\{\infty,h^{-1}(\infty)\})$ is 
\begin{eqnarray*} 
E_h(z_0)&=&\inf \{M \geq 1: \, \text{there exists a sequence of open sets } A_k \subset G, \,   \\ & &k\in \mathbb{N}, \text{ with } z_0 \in A_k, \, k \in \mathbb{N}, \text{ and } \operatorname{diam}(A_k) \to 0 
\text{ as } k \to \infty  \\ & &  \text{ such that } E(A_k) \leq M \text{ and } E(h(A_k)) \leq M \text{ for all } k \in \mathbb{N}\}. 
\end{eqnarray*}
The definition can be extended to $\{\infty,h^{-1}(\infty)\}$ by composing $h$ with M\"obius transformations. 

\begin{theorem}\label{thm:dimitrios}
Let $G\subset \hatc$ be open and $h:G \to \hatc$ a sense-preserving topological embedding. If there is $H \geq 1$ so that 
\begin{equation}\label{eq:paupau}
E_h(z_0) \leq H \quad \text{for all } z_0 \in G,  
\end{equation} 
then $h$ is quasiconformal on $G$. If in addition to \eqref{eq:paupau} also $E_h(z_0)=1$ for almost every $z_0 \in G$, then $h$ is conformal on $G$. 
\end{theorem}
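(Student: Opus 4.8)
The plan is to verify the \emph{geometric} definition of quasiconformality directly: produce a constant $K=K(H)$ with $\modu(h\Gamma)\le K\,\modu(\Gamma)$ for every path family $\Gamma$ in $G\cap\mathbb C$, so that $K=1$ forces conformality via \eqref{eq:invariance}. After composing with M\"obius transformations and localizing, assume $G\subset\mathbb C$ and $h(G)\subset\mathbb C$ are bounded. A useful preliminary remark is that the hypothesis is \emph{self-dual}: if $E_h(z_0)\le M$ is witnessed by open sets $A_k\ni z_0$ with $\diam(A_k)\to 0$, then $\diam(h(A_k))\to 0$ by continuity of $h$, and the sets $h(A_k)$ witness $E_{h^{-1}}(h(z_0))\le M$; thus $h^{-1}$ satisfies the same hypothesis, so any regularity we establish for $h$ is automatically available for $h^{-1}$. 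It therefore suffices to show $h\in W^{1,2}_{\loc}(G)$ with linear dilatation essentially bounded by a function of $H$; combining this with the same statement for $h^{-1}$, the classical equivalence of the analytic and geometric definitions of quasiconformality gives the first assertion, and the conformal case will follow once the dilatation is $1$ a.e.

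To obtain the Sobolev regularity, for $z_0\in G$ record witnessing disks $B_k(z_0)\subset A_k(z_0)\subset H\,B_k(z_0)$ and $B'_k(z_0)\subset h(A_k(z_0))\subset H\,B'_k(z_0)$, and set
\[
g(z_0)=\liminf_{k\to\infty}\frac{r(B'_k(z_0))}{r(B_k(z_0))}, \qquad J(z_0)=\liminf_{k\to\infty}\frac{|h(A_k(z_0))|}{|A_k(z_0)|}.
\]
From $B'_k\subset h(A_k)$ and $A_k\subset HB_k$ one gets $g(z_0)^2\le H^2 J(z_0)$, and a Vitali/Besicovitch-type covering argument, using that injectivity of $h$ sends disjoint disks to disjoint sets, gives $J\in L^1_{\loc}(G)$; hence $g\in L^2_{\loc}(G)$. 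The crucial step is then to show that $g$ (extended by $0$) is an \emph{upper gradient} of $h$ off a $\modu$-null family of paths, i.e.\ $|h(x)-h(y)|\le\int_\gamma g\,ds$ for almost every rectifiable $\gamma$ joining $x$ to $y$ in $G$. This is done by a chaining argument: along a typical $\gamma$ one covers $|\gamma|\cap G$ by witnessing sets $A_k(z_0)$ of bounded overlap, bounds the oscillation $\diam h(A_k(z_0))\le 2H\,r(B'_k(z_0))$ in terms of $g(z_0)$ and the step length, sums these against $\int_\gamma g\,ds$, and discards the $\modu$-null families along which such a covering cannot be arranged. Granting this, $h\in N^{1,2}_{\loc}(G)=W^{1,2}_{\loc}(G)$.

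With $h\in W^{1,2}_{\loc}$ in hand, $h$ is differentiable a.e.\ (as a planar Sobolev homeomorphism), and at a point $z_0$ where $Dh(z_0)$ exists and is invertible the inclusions for $A_k(z_0)$ and $h(A_k(z_0))$, compared with the first-order approximation $h(A_k(z_0))\approx Dh(z_0)(A_k(z_0)-z_0)+h(z_0)$ for large $k$, force the ratio of the singular values of $Dh(z_0)$ to be at most an absolute constant times $H^2$; hence the linear dilatation of $h$ is $\le CH^2$ a.e. Since $h^{-1}\in W^{1,2}_{\loc}$ as well, the analytic characterization of quasiconformality applies and $h$ is $CH^2$-quasiconformal. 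If in addition $E_h(z_0)=1$ for a.e.\ $z_0$, then at such a point one may take the $A_k(z_0)$ and the $h(A_k(z_0))$ arbitrarily round, so $Dh(z_0)$ maps arbitrarily round disks to arbitrarily round sets and must be a similarity; thus the dilatation equals $1$ a.e., $h$ is $1$-quasiconformal, and $h$ is conformal by Weyl's lemma.

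The main obstacle is the upper-gradient construction. The classical proofs of the metric definition of quasiconformality (Gehring, V\"ais\"al\"a, Heinonen--Koskela, and others) rely on control over the images of \emph{all} small disks about each point, and so do not apply here, where the eccentricity is controlled only along \emph{some} sequence of scales at each point. Converting this one-sided, along-a-subsequence information into a genuine path-integral upper gradient, while simultaneously controlling the mutual overlap of the witnessing sets $A_k(z_0)$ as $z_0$ varies and identifying the correct $\modu$-null exceptional families, is the technical heart of the matter; the remaining ingredients (integrability of the volume derivative, a.e.\ differentiability, and the analytic $\Leftrightarrow$ geometric dictionary for quasiconformal maps) are standard.
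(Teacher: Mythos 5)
The paper itself does not prove this theorem: it is stated with a one-line citation to \cite{Nta24a}*{Theorem 1.2} for the quasiconformality part and to \cite{Nta23a}*{Lemma 2.5} for the conformality upgrade, so there is no in-paper argument to match yours against. Your outline is a sensible reconstruction of the strategy behind those references, and the peripheral pieces are sound: the self-duality remark (the same witnessing sets, read through $h$, show $E_{h^{-1}}(h(z_0))\le E_h(z_0)$, so no separate hypothesis is needed on $h^{-1}$); the comparison $g(z_0)^2\le H^2J(z_0)$ from $B_k'\subset h(A_k)$ and $A_k\subset HB_k$; the Vitali-type argument giving $J\in L^1_{\loc}$ from injectivity of $h$; and, in the conformal case, the identification of $Dh(z_0)$ as a similarity at a.e.\ differentiability point from the inclusions $B_k'\subset h(A_k)\subset HB_k'$ with $H$ replaced by $1+o(1)$. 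None of that is in dispute.

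What is missing is precisely the step you bracket with ``granting this'': that $g$ is an upper gradient of $h$ along $\modu$-a.e.\ path. That is not a technicality to be discharged by a ``chaining argument''; it is the main theorem of \cite{Nta24a}. The obstruction you yourself name is the real one and you do not overcome it: the hypothesis gives well-rounded image sets only along an arbitrary, point-dependent sequence of scales, so for a fixed rectifiable $\gamma$ there is no obvious way to select witnessing sets $A_{k(z)}(z)$, $z\in|\gamma|\cap G$, that simultaneously cover $|\gamma|\cap G$, have bounded overlap, and have diameters adapted to a common parameter so that their total contribution is controlled by $\int_\gamma g\,ds$. Ntalampekos resolves this with a purpose-built covering theorem that turns ``some good scale at each point'' into a Vitali-type family with controlled multiplicity, and that device is the substance of the citation. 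Without it, the chain from $E_h\le H$ to $h\in W^{1,2}_{\loc}$ is broken, and everything downstream (a.e.\ differentiability, the dilatation bound, the analytic--geometric equivalence, Weyl's lemma) rests on an unproved hypothesis. So the proposal correctly maps the standard surrounding machinery but leaves exactly the nontrivial ingredient — the one the paper delegates to the reference — as an assumption.
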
 
\begin{proof} 
The first claim is \cite[Theorem 1.2]{Nta24a}, and the second claim is \cite[Lemma 2.5]{Nta23a}.  
\end{proof} 

We can now finish the proof of Theorem \ref{thm:rigidity}. We need to prove that 
the map $f_*$ in Proposition \ref{prop:Schottky} is conformal on $\hatc$. Conformality of $f_*$ in $\Omega_*$ shows that $E_{f_*}(z_0)=1$ 
for every $z_0\in \Omega_*$. We also have $E_{f_*}(z_0)=1$ for every $z_0 \in Y$, since we can apply the interiors of the disks $B_j$ in \eqref{eq:ahja2} to test the definition of the 
eccentric distortion. 

Finally, we claim that 
\begin{equation} 
\label{eq:shalala}
E_{f_*}(z_0) \leq 10^{30} \quad \text{for every } z_0 \in X.  
\end{equation} 
If $z_0 \in E \times F$, we can test the definition of eccentric distortion with arbitrarily small open neighborhoods of the unions of $z_0$ and the neighbors $D^k_z$ in Proposition \ref{prop:fclose} to prove \eqref{eq:shalala}. 

If $g \in \mathcal{S}(\Omega)$ is a non-identity element, then every $z_0 \in g(E \times F)$ has an open neighborhood $U$ so that 
$$
f_*(z)=(g' \circ f_* \circ g^{-1})(z) \quad \text{for all } z \in U, 
$$
where $g' \in \mathcal{S}(\Omega')$. The eccentric distortion is not affected by compositions with $g'$ and $g^{-1}$, because they are (anti)conformal. Since 
$E_{f_*}(g^{-1}(z_0)) \leq 10^{30}$ by the previous paragraph, we conclude that \eqref{eq:shalala} holds for all $z_0 \in X$.  
The proofs of Theorem \ref{thm:rigidity} and Theorem \ref{thm:main} are complete. 

\begin{remark} \label{rem:viime}
The main issue in proving Theorem \ref{thm:dimitrios} is the Sobolev regularity of the map $h$. In our setting this amounts to proving absolute continuity of $f_*$ on almost every path 
across the exceptional sets $X$ and $Y$. Set $X$ could be handled using Proposition \ref{prop:abscont} (see also \cite[Theorem 4.1]{NtaRaj23}). A similar approach can also be applied to $Y$, but the arguments are much more involved and include the covering theorems introduced in \cite{Nta24a}.   
\end{remark}

%%%%%%%%%%%%%%%%%%%%%%%%%%%%%%%%%%%%%%%%%%%%%%%%%%%%%%%%%%%%%%%%%%%%%%%%%%%%%%%%%%%%%%%%%%%%%%%%%%%
%%%%%%%%%%%%%%%%%%%%%%%%%%%%%%%%%%%%%%%%%%%%%%%%%%%%%%%%%%%%%%%%%%%%%%%%%%%%%%%%%%%%%%%%%%%%%%%%%%%

\subsection*{Acknowledgment}
We thank Dimitrios Ntalampekos for his comments.

\vskip 10pt
\noindent
%{\bf Acknowledgement.}

\bibliographystyle{alpha}
	\bibliography{Bibliography}

\end{document}